\newcommand{\betless}{\noalign{\vskip3pt plus 3pt minus 1pt}}
\newcommand{\bet}{\noalign{\vskip6pt plus 3pt minus 1pt}}
\renewcommand{\O }{\Omega }
\renewcommand{\lor }{\longrightarrow}
\newtheorem{theorem}{Theorem}
\newtheorem{lemma}{Lemma}
\newcounter{remark}
\def\theremark {\arabic{remark}}
\newenvironment{remark}{\refstepcounter{remark}\par\noindent{\bf Remark\ \theremark}\ }{\par}
\newtheorem{Proof}{Proof}
\newenvironment{proof}{\begin{Proof}\rm}{\hfill $\Box$ \end{Proof}}
\title{Static two-grid  mixed finite-element approximations to the Navier-Stokes equations}
\author{Javier de Frutos\thanks{Departamento de Matem\'{a}tica Aplicada,
Universidad de Valladolid. Spain. Research supported by Spanish MEC
under grant MTM2010-14919 and by JCyL under grant VA001A10-1 (frutos@mac.uva.es) } \and Bosco
Garc\'{\i}a-Archilla\thanks{Departamento de Matem\'{a}tica Aplicada
II, Universidad de Sevilla, Sevilla, Spain. Research supported by
Spanish MEC under grant MTM2009-07849 (bosco@esi.us.es)}
  \and Julia Novo\thanks{Departamento de
Matem\'aticas, Universidad Aut\'onoma de Madrid, Instituto de
Ciencias Matem\'aticas CSIC-UAM-UC3M-UCM, Spain. Research supported
by Spanish MEC under grant MTM2010-14919 (julia.novo@uam.es)}}
\begin{document}
\maketitle

\begin{abstract} A two-grid scheme based on mixed finite-element approximations to the incompressible Navier-Stokes
equations is introduced and analyzed. In the first level the
standard mixed finite-element approximation over a coarse mesh is
computed. In the second level the approximation is postprocessed by
solving a discrete Oseen-type problem on a finer mesh.
The
two-level method is optimal in the sense that, when a suitable value
of the coarse mesh diameter is chosen, it has
 the rate of convergence of
the standard mixed finite-element method over the fine mesh.
Alternatively, it can be seen as a postprocessed method in which the
rate of convergence is increased by one unit with respect to the
coarse mesh. The analysis takes into account the loss of regularity
at initial time of the solution of the Navier-Stokes equations in
absence of nonlocal compatibility  conditions.
 Some numerical experiments are shown.
\end{abstract}

\section{Introduction}

\label{sec:1} We consider the incompressible Navier--Stokes
equations
\begin{eqnarray}
\label{onetwo} u_t -\nu \Delta u + (u\cdot\nabla)u + \nabla p
&=& f,\\ \bet {\rm div}(u)&=&0,\nonumber
\end{eqnarray}
in a bounded domain $\Omega\subset {\mathbb R}^d$ ($d=2,3$)
with a smooth boundary subject to homogeneous Dirichlet
boundary conditions $u=0$ on~$\partial\Omega$.
In~(\ref{onetwo}), $u$ is the velocity field, $p$ the pressure,
$\nu>0$ the diffusion coefficient and~$f$ a given force field.

In this paper we study the following two-grid mixed
finite-element method for the spatial discretization of the
above equations. First, for the solution $(u,p)$ of the fully
nonlinear Navier-Stokes equations~(\ref{onetwo}) corresponding
to a given initial condition
\begin{eqnarray}\label{ic}
u(\cdot,0)=u_0,
\end{eqnarray}
the mixed finite-element approximation~$(u_H,p_H)$ over a coarse
mesh of diameter $H$ is computed. Then, for any time $t>0$, the
postprocessed approximation $(\tilde u_h,\tilde p_h)$  is obtained
as the mixed finite-element approximation over a finer mesh ($h<H$)
to the following steady Oseen-type problem:
\begin{equation}
\begin{array}{rcl}
\left.\begin{array}{r@{}} -\nu \Delta \tilde u
+(u_H({t})\cdot\nabla) \tilde u+\nabla \tilde
p=f-\frac{d}{dt}u_H({t})
\\
\betless {\rm div}(\tilde u)=0
\end{array}\right\}&\qquad \hbox{\rm in~$\Omega$},
\\
\bet &\hspace*{-41pt}\tilde u=0, \quad \hbox{~~~~~~~~~~\,\rm
on~$\partial\Omega$}.
\end{array}
\label{eq:oseen}
\end{equation}
In this paper we prove that, in terms of the fine mesh diameter $h$,
this two-grid technique is of optimal order in the sense that,
 for appropriate choices of the coarse mesh diameter $H$,
the method has the same rate of convergence of standard mixed finite
element approximations in the fine mesh. On the other hand, for a
suitable value of the discretization parameter $h$, the rate of
convergence of the postprocessed approximation in terms of $H$
increases by one unit the rate of convergence of the coarse standard
approximation. The improvement in precision is achieved in both the
$H^1$ norm for the velocity and the $L^2$ norm for the pressure in
the case of linear, quadratic and cubic elements. For other than
linear elements the rate of convergence in the $L^2$ norm of the
velocity is also increased by one unit. We remark that time
evolution is performed only at the coarse mesh whereas at the fine
grid the time appears only as a parameter (see equation
(\ref{eq:oseen})), thus the name of static two-grid method.

Two-grid or two-level methods are a well established technique for
nonlinear steady problems, see~\cite{Xu}. In~\cite{Lay-Tob},
\cite{Lay-Len} several two-level methods are considered to
approximate the steady Navier-Stokes equations. They require solving
a nonlinear system over a coarse mesh and, depending on
the algorithm chosen, one Stokes problem, one linear Oseen problem
or one Newton step over the fine mesh. The corresponding algorithms
 obtain the optimal rate of convergence in the fine mesh for
appropriate choices of the coarse mesh diameter $H$.

In the case of nonlinear evolutionary equations, two-grid techniques
have been proposed and studied in \cite{Abboud_etal},
\cite{Hou_etal}, \cite{Liu_Hou}, \cite{two-grid-nosotros}. In these
methods, as opposed to the method studied in the present paper, time
evolution is also performed over the fine mesh.  The advantage of
the method studied in the present paper is that since the time
integration is only carried out on the coarse mesh, computations on
the fine grid can be done at selected target time levels where an
improved approximation is desired, with the corresponding reduction
of computing time, specially if these target time levels are
sufficiently spaced in time. For this reason, although some of the
two-grid methods that incorporate the evolution in time of the fine
mesh approximation are more accurate, the method we present can
still be more efficient in terms of computational effort for a given
error level.


Two-grid techniques that integrate in time only on the coarse level
have previously been developed in \cite{Bosco-Julia-Titi},
\cite{Bosco-Julia-Titi-2} (see also \cite{Margolin_titi_wynne}) for
spectral methods, and later extended to mixed finite-element
formulations in \cite{mini}, \cite{bjj}, \cite{jbj_regularity}. In
all these works the two grid method is referred to as postprocessed
Galerkin method, and, instead of~(\ref{eq:oseen}), the
approximation~$(\tilde u_h,\tilde p_h)$ is found as an approximation
to the following Stokes problem
\begin{equation}
\begin{array}{rcl}
\left.\begin{array}{r@{}} -\nu \Delta \tilde u +\nabla \tilde
p=f-\frac{d}{dt}u_H({t}) -(u_H({t})\cdot\nabla) u_H({t})
\\
\betless {\rm div}(\tilde u)=0
\end{array}\right\}&\qquad \hbox{\rm in~$\Omega$},
\\
\bet &\hspace*{-41pt}\tilde u=0, \quad \hbox{~~~~~~~~~~\,\rm
on~$\partial\Omega$}.
\end{array}
\label{eq:stokes}
\end{equation}
This two-grid method will be termed  standard postprocessed method,
to differentiate it to that studied in the present paper, which will
be termed  new postprocessed method. Both, the standard and the new
postprocessed methods, have the same rate of convergence. However,
as already noted in~\cite{jbj_ima1} for nonlinear
convection-diffusion problems, the new postprocessing technique
produces more accurate approximations than the standard
postprocessed method, for moderate to small values of the diffusion
parameter $\nu$. This will also be the case in the numerical
experiments in the present paper for moderate values of the
Reynolds number.

In the present paper we take into account the loss of regularity
suffered by the solutions of the Navier-Stokes equations at the
initial time in the absence of nonlocal compatibility conditions.
Thus, for the analysis, we do not assume the solution~$u$ to have
more than second-order spatial derivatives bounded in $L^2$ up to
initial time $t=0$, since demanding further regularity requires the
data to satisfy nonlocal compatibility conditions unlikely to be
fulfilled in practical situations \cite{heyran0}, \cite{heyran2}.
Due to the loss of regularity at $t=0$,  the best error bound that
we can obtain is
$O(H^5|\log(H)|)$. For this reason we do not analyze
higher than cubic finite elements. The same limit in the rate of
convergence was found in \cite{heyran2} for standard mixed
finite-element approximations and in \cite{jbj_regularity},
\cite{two-grid-nosotros} for  two-grid schemes.

In practice, any method to numerically solve evolutionary equations
needs of some time discretization procedure. For brevity reasons, we
have preferred to present the method in a semidiscrete manner
without reference to any particular time discretization. However, we
emphasize that being static, the method we present can be applied
exactly in the same form,
 with any time discretization. The analysis of fully discrete procedures can
be developed along the same lines that appear in \cite{jbj_fully_discrete}, \cite{jbj_ima2}.

The rest of the paper is as follows. In Section 2 we introduce
some preliminaries and notation. In Section 3 we carry out the
error analysis of the new method. Finally, some numerical
experiments are shown in the last section.

\section{Preliminaries and notations}
\label{sec:2}
We will assume that
$\Omega$ is a bounded domain in~${\mathbb{R}}^{d},\, d=2,3$,
of class~${\cal C}^m$, for $m\ge 2$. When dealing with linear elements
($r=2$ below) $\Omega$ may also be a convex polygonal or polyhedral domain.
We consider
the Hilbert spaces
\begin{align*}
 H&=\left\{ u \in L^{2}(\O)^d \mid\mbox{div}(u)=0, \, u\cdot n_{|_{\partial \Omega}}=0 \right\},\\
V&=\left\{ u \in H^{1}_{0}(\O)^d \mid \mbox{div}(u)=0 \right\},
\end{align*}
 endowed
with the inner product of $L^{2}(\O)^{d}$ and $H^{1}_{0}(\O)^{d}$,
respectively. For  $l\ge 0$ integer and $1\le q\le \infty$, we consider the
standard  spaces, $W^{l,q}(\Omega)^d$, of functions with
derivatives up to order $l$ in $L^q(\Omega)$, and
$H^l(\Omega)^d=W^{l,2}(\Omega)^d$. We will denote by $\|\cdot \|_l$ the norm in $H^l(\Omega)^d$,
and~$\|\cdot\|_{-l}$ will represent
the norm of its dual space. We consider also the quotient spaces
$H^l(\Omega)/{\mathbb R}$ with norm $\| p\|_{H^l/{\mathbb R}}= \inf\{ \| p+c\|_l\mid  c\in {\mathbb R}\}$.

We recall the following Sobolev's imbeddings \cite{Adams}: For
$q \in [1, \infty)$, there exists a constant $C=C(\Omega, q)$ such
that
\begin{equation}\label{sob1}
\|v\|_{L^{q'}} \le C \| v\|_{W^{s,q}}, \,\,\quad
\frac{1}{q'}
\ge \frac{1}{q}-\frac{s}{d}>0,\quad q<\infty, \quad v \in
W^{s,q}(\Omega)^{d}.
\end{equation}
For $q'=\infty$, (\ref{sob1}) holds with $\frac{1}{q}<\frac{s}{d}$.

The following inf-sup condition is satisfied (see \cite{girrav}), there exists a constant $\beta>0$
such that
\begin{equation}\label{inf-sup}
\inf_{q\in L^2(\Omega)/{\Bbb R}}\sup_{ v\in H_0^1(\Omega)^d}\frac{(q,\nabla\cdot v)}{\|v\|_1\|q\|_{L^2/{\Bbb R}}}\ge \beta.
\end{equation}

Let $\Pi:
L^2(\O)^d \lor H$ be the $L^2(\O)^d $ projection onto $H$.
We denote by $A$ the Stokes
operator on $\O$: $$ A: \mathcal{D}(A)\subset H \lor H, \quad \, A=-\Pi\Delta , \quad
\mathcal{D}(A)=H^{2}(\O)^{d} \cap V. $$

We shall assume that $u$ is a strong solution up to time $t=T$, so that
\begin{equation}
\label{eq:M_1}
\|u(t)\|_1\le M_1,\quad
\|u(t)\|_2\le M_2,\quad 0\le t\le T,
\end{equation}
for some constants $M_1$ and~$M_2$. We shall also assume that there exists
a constant $\tilde M_2$ such that
\begin{equation}
\|f\|_1+\|f_t\|_1
+\|f_{tt}\|_1\le \tilde M_2,\quad 0\le t\le T.
\label{tildeM2}
\end{equation}
Finally, we shall assume that for some $k\ge 2$
$$
\sup_{0\le t\le T}
\bigl\| \partial_t^{\lfloor k/2\rfloor} f\bigr\|_{k-1-2{\lfloor k/2\rfloor}}+
\sum_{j=0}^{\lfloor (k-2)/2\rfloor}\sup_{0\le t\le T}
\bigl\| \partial_t^j f\bigr\|_{k-2j-2}
<+\infty,
$$
so that, according to Theorems~2.4 and~2.5 in~\cite{heyran0},
there exist positive constants $M_k$ and $K_{k}$ such that the
following bounds hold:
\begin{eqnarray}
\| u(t)\|_{k} + \| u_t(t)\|_{k-2}
+\| p(t)\|_{H^{k-1}/{\mathbb R}} \le M_k \tau(t)^{1-k/2},
\qquad\qquad\quad
\label{eq:u-inf}
\\
\quad
\int_0^t
\sigma_{k-3}(s)
\bigl(\| u(s)\|_{k}^2 + \| u_s(s)\|_{k-2}^2
+\| p(s)\|_{H^{k-1}/{\mathbb R}}^2+
\| p_s(s)\|_{H^{k-3}/{\mathbb R}} ^2\bigl)\,{\rm d}s\le
K_{k}^2,
\label{eq:u-int}
\end{eqnarray}
where $\tau(t)=\min(t,1)$ and~$\sigma_n=e^{-\alpha(t-s)}\tau^n(s)$ for some
$\alpha>0$. Observe that for $t\le T<\infty$, we can take~$\tau(t)=t$ and
$\sigma_n(s)=s^n$. For simplicity, we will take these values of
$\tau$ and~$\sigma_n$.

Let $\mathcal{T}_{h}=(\tau_i^h,\phi_{i}^{h})_{i \in
I_{h}}$, $h>0$ be a family of partitions of suitable domains $\Omega_h$, where
$h$ is the maximum diameter of the elements $\tau_i^h\in \mathcal{T}_{h}$,
and $\phi_i^h$ are the mappings of the reference simplex
$\tau_0$ onto $\tau_i^h$.

Let $r \geq 2$, we consider the finite-element spaces
$$
S_{h,r}=\left\{ \chi_{h} \in \mathcal{C}\left(\overline{\O}_{h}\right) \,  |
\, {\chi_{h}}{|_{\tau_{i}^{h}}}
\circ \phi^{h}_{i} \, \in \, P^{r-1}(\tau_{0})  \right\} \subset H^{1}(\O_{h}),
\
{S}_{h,r}^0= S_{h,r}\cap H^{1}_{0}(\O_{h}),
$$
where $P^{r-1}(\tau_{0})$ denotes the space of polynomials
of degree at most $r-1$ on $\tau_{0}$.

We shall denote by $(X_{h,r}, Q_{h,r-1})$
the so-called Hood--Taylor element \cite{BF,hood0}, when $r\ge 3$, where
$$
X_{h,r}=\left({S}_{h,r}^0\right)^{d},\quad
Q_{h,r-1}=S_{h,r-1}\cap L^2(\O_{h})/{\mathbb R},\quad r
\ge 3,
$$
and the so-called mini-element~\cite{Brezzi-Fortin91} when $r=2$,
where $Q_{h,1}=S_{h,2}\cap L^2(\O_{h})/{\mathbb R}$, and
$X_{h,2}=({S}_{h,2}^0)^{d}\oplus{\mathbb B}_h$. Here,
${\mathbb B}_h$ is spanned by the bubble functions $b_\tau$,
$\tau\in\mathcal{T}_h$, defined by
$b_\tau(x)=(d+1)^{d+1}\lambda_1(x)\cdots
 \lambda_{d+1}(x)$,  if~$x\in \tau$ and 0 elsewhere,
where  $\lambda_1(x),\ldots,\lambda_{d+1}(x)$ denote the
 barycentric coordinates of~$x$. For these elements a uniform inf-sup condition is satisfied
(see \cite{BF}), that is,
there exists a constant $\beta>0$ independent of the mesh grid size $h$ such that
\begin{equation}\label{lbbh}
 \inf_{q_{h}\in Q_{h,r-1}}\sup_{v_{h}\in X_{h,r}}
\frac{(q_{h},\nabla \cdot v_{h})}{\|v_{h}\|_{1}
\|q_{h}\|_{L^2/{\mathbb R}}} \geq \beta.
\end{equation}
The approximate velocity belongs to the discrete
divergence-free space
$$
V_{h,r}=X_{h,r}\cap \left\{ \chi_{h} \in H^{1}_{0}(\O_{h})^d \mid
(q_{h}, \nabla\cdot\chi_{h}) =0  \quad\forall q_{h} \in Q_{h,r-1}
\right\},
$$
which is not a subspace of $V$.

Let
$(u,p)\in  (H^2(\Omega)^d\cap V )\times ( H^1(\Omega)\slash \mathbb{R} )$
be the solution of a Stokes problem with right-hand side $g$,
we will denote by $s_h=S_{h}(u)\in V_{h}$ the so-called Stokes projection
(see \cite{heyran2}) defined as the velocity component of the solution
of the following problem:
find $(s_h,q_h)\in(X_{h,r},Q_{h,r-1})$ such that
\begin{align}
   \nu(\nabla s_h,\nabla\phi_h)+(\nabla q_h,\phi_h)&=(g,\phi_h)
&&\forall \phi_h\in X_{h,r},&&&&
\label{stokesnew}\\
(\nabla \cdot s_h,\psi_h)&=0&& \forall \psi_h\in  Q_{h,r-1}.&&&&
\label{stokesnew2}
\end{align}
The following bound holds for $2\le l\le r$:
\begin{equation}
\|u-s_h\|_0+h\|u-s_h\|_1\le C h^l \bigl (\|u\|_l+\|p\|_{H^{l-1}/{\mathbb R}}\bigr ).
\label{stokespro}
\end{equation}
The proof of (\ref{stokespro}) for $\Omega=\Omega_h$ can be found
in~\cite{heyran2}. The
bound for the pressure is \cite{girrav}
\begin{equation}\label{stokespre}
\|p-q_h\|_{L^2/{\mathbb R}}\le
C_\beta h^{l-1}\bigl (\|u\|_l+\|p\|_{H^{l-1}/{\mathbb R}}\bigr ),
\end{equation}
where the constant $C_\beta$ depends on the constant $\beta$ in the inf-sup condition
(\ref{lbbh}).

We consider the semi-discrete finite-element approximation~$(u_H,p_H)$ to~$(u,p)$,
solution
of~(\ref{onetwo})--(\ref{ic}).
That is,
given $u_H(0)=\Pi_Hu_{0}$, we compute
$u_{H}(t)\in X_{H,r}$ and $p_{H}(t)\in Q_{H,r-1}$, $t\in(0,T]$,   satisfying
\begin{align}\label{ten}
(\dot u_{H}, \phi_{H})
+     \nu( \nabla u_{H}, \nabla \phi_{H}) + b(u_{H}, u_{H}, \phi_{H}) +
( \nabla p_{H}, \phi_{H}) & =  (f, \phi_{H})&& \forall \, \phi_{H} \in X_{H,r},\\
(\nabla \cdot u_{H}, \psi_{H}) & = 0&&\forall \, \psi_{H} \in Q_{H,r-1},
\label{ten2}
\end{align}
where $b(u,v,w)=((u\cdot \nabla)v+\frac{1}{2}(\nabla \cdot u)v,w)$ for any $u,v,w\in H_0^1(\Omega)^d$.

For $2\le r\le 5$, provided that~(\ref{stokespro})--(\ref{stokespre}) hold for
$l\le r$,
and~(\ref{eq:u-inf})--(\ref{eq:u-int}) hold for $k=r$, then we have
\begin{equation}
\label{eq:err_vel(t)}
\| u(t)-u_H(t)\|_0+H\| u(t)-u_H(t)\|_1
\le C\frac{H^r}{t^{(r-2)/2}},
\quad 0\le t\le T,
\end{equation}
(see, e.g.,
\cite{jbj_regularity,heyran0,heyran2}),
and also,
\begin{equation}
\label{eq:err_pre(t)}
\| p(t)-p_H(t)\|_{L^2/{\mathbb R}} \le C\frac{H^{r-1}}{t^{(r'-2)/2}},
\quad 0\le t\le T,
\end{equation}
where $r'=r$ if $r\le 4$ and $r'=r+1$ if~$r=5$.
\section{The new postprocessed method}
The postprocessing technique we propose is a two-level or two-grid method. In the first level, we choose a coarse
mesh of size $H$ and compute the mixed finite-element approximation
~$(u_H,p_H)$ to~$(u,p)$ defined by (\ref{ten})-(\ref{ten2}). In the second level, the discrete velocity and pressure $(u_H(t),p_H(t))$
are postprocessed by solving the following linear Oseen problem: find $(\tilde u_h(t),\tilde p_h(t))\in (X_{h,r},Q_{h,r-1})$, $h<H$,
satisfying for all $\phi_h\in X_{h,r}$ and $\psi_h\in Q_{h,r-1}$
\begin{eqnarray}
\nu (\nabla \tilde u_h(t),\phi_h)+((u_H(t)\cdot \nabla)\tilde u_h(t),\phi_h)+(\nabla \tilde p_h(t),\phi_h)&=&(f(t)-\dot u_H(t),\phi_h),\qquad \label{pos11}\\
(\nabla \cdot \tilde u_h(t),\psi_h)&=&0.  \label{pos12}
\end{eqnarray}
Equations (\ref{pos11})-(\ref{pos12}) can also be solved over a higher order mixed finite-element space over the same grid. For
simplicity in the exposition we will only consider the case in which we refine the mesh at the postprocessing step.

Let us observe that projecting equation (\ref{pos11}) over the discretely-free space $V_{h,r}$, and
avoiding for simplicity the dependence on $t$ in the notation,  we get that $\tilde u_h\in V_{h,r}$
satisfies
\begin{equation}\label{pos_pro}
\nu (\nabla \tilde u_h,v_h)+((u_H\cdot \nabla)\tilde u_h,v_h)=(f-\dot u_H,\phi_h),\quad \forall v_h\in V_{h,r}.
\end{equation}
We now prove that equation (\ref{pos_pro}) is well-posed, i.e., for
$H$ small enough there exists a unique function $\tilde u_h\in
V_{h,r}$ solving (\ref{pos_pro}). Let us denote by $B^H$ the
bilinear form defined by \begin{equation}\label{BH}
 B^H(u_h,v_h)=\nu(\nabla
u_h,\nabla v_h)+((u_H\cdot \nabla) u_h,v_h),\quad u_h,v_h\in
V_{h,r}.
\end{equation}
We proceed to show that $B^H$ is coercive which implies that there exists a unique function $\tilde u_h\in V_{h,r}$ satisfying (\ref{pos_pro}).
Let us also observe that once a unique $\tilde u_h$ is found, using the inf-sup condition (\ref{lbbh}) one easily obtains
the existence and uniqueness of the pair $(\tilde u_h,\tilde p_h)$ satisfying (\ref{pos11})-(\ref{pos12}).

\begin{lemma}\label{lemacoer}
Let $B_H$ be the bilinear form defined in (\ref{BH}).  Then,  there exists a constant $C$ such that for
$t>0$ the following
bound holds:
\begin{equation}\label{coer_B^H}
\left|B^H(v_h,v_h)\right|\ge \left(\nu-C
\frac{H^{r-1+\gamma}}{t^{(r-2)/2}}\right)\|v_h\|_1^2,\quad \forall
v_h\in V_{h,r},
\end{equation}
where $\gamma=1/2$ if the dimension~$d$ is $d=2$, and $\gamma=1/4$ if $d=3$.
\end{lemma}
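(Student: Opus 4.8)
The plan is to isolate the coercive part of $B^H$ and show that the convective contribution is of lower order in $H$. Writing $B^H(v_h,v_h)=\nu\|\nabla v_h\|_0^2+((u_H\cdot\nabla)v_h,v_h)$, the diffusive term equals $\nu\|v_h\|_1^2$ (using that on $X_{h,r}\subset H_0^1(\Omega_h)^d$ the $H^1_0$ norm is $\|\nabla\cdot\|_0$), and supplies the leading coefficient $\nu$. The crucial structural observation is that the convective term does not see the exact solution: decomposing $u_H=u+(u_H-u)$ and integrating by parts, one has
\[
((u\cdot\nabla)v_h,v_h)=\tfrac12(u,\nabla|v_h|^2)=-\tfrac12((\nabla\cdot u)\,v_h,v_h)=0,
\]
since $u\in V$ is divergence-free and $v_h$ vanishes on the boundary. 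Hence only the error $w:=u_H-u$ contributes, and it remains to bound $|((w\cdot\nabla)v_h,v_h)|$ by $C\,H^{r-1+\gamma}t^{-(r-2)/2}\|v_h\|_1^2$.

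For this estimate I would apply Hölder's inequality with exponents $(4,2,4)$,
\[
|((w\cdot\nabla)v_h,v_h)|\le \|w\|_{L^4}\,\|\nabla v_h\|_{0}\,\|v_h\|_{L^4},
\]
and control the $L^4$ norms through the Gagliardo--Nirenberg (Ladyzhenskaya) inequality, which for $d\le 3$ gives $\|v\|_{L^4}\le C\|v\|_0^{1-d/4}\|v\|_1^{d/4}$. For $v_h$ this yields $\|v_h\|_{L^4}\le C\|v_h\|_1$, while for the error $w$ it produces the dimension-dependent split $\|w\|_{L^4}\le C\|w\|_0^{1/2}\|w\|_1^{1/2}$ in $d=2$ and $\|w\|_{L^4}\le C\|w\|_0^{1/4}\|w\|_1^{3/4}$ in $d=3$. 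Inserting the standard error bounds~(\ref{eq:err_vel(t)}), namely $\|w\|_0\le C H^r t^{-(r-2)/2}$ and $\|w\|_1\le C H^{r-1}t^{-(r-2)/2}$, the two interpolation exponents collapse exactly to $\|w\|_{L^4}\le C H^{r-1+\gamma}t^{-(r-2)/2}$ with $\gamma=1/2$ when $d=2$ and $\gamma=1/4$ when $d=3$. Combining the three factors gives $|((u_H\cdot\nabla)v_h,v_h)|\le C H^{r-1+\gamma}t^{-(r-2)/2}\|v_h\|_1^2$, and therefore
\[
B^H(v_h,v_h)\ge\Bigl(\nu-C\,H^{r-1+\gamma}t^{-(r-2)/2}\Bigr)\|v_h\|_1^2,
\]
which is the claimed bound; for $H$ small (depending on $t$) the coefficient is positive, so $|B^H(v_h,v_h)|=B^H(v_h,v_h)$ and the inequality as stated follows.

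The delicate point is matching the precise exponent $\gamma$, which forces the particular Hölder splitting $(4,2,4)$ rather than a naive $L^3$--$L^6$ pairing: the symmetric choice $p=q=4$ is exactly what lets the $L^2$ error ($H^r$) and the $H^1$ error ($H^{r-1}$) interpolate into the optimal power $H^{r-1+\gamma}$, and it is the interpolation exponent $d/4$ in the Gagliardo--Nirenberg inequality that encodes the difference between $\gamma=1/2$ and $\gamma=1/4$. One must also verify that $H^1\hookrightarrow L^4$ holds in both $d=2,3$, which it does; this is why the argument, and the lemma, are restricted to $d\le 3$. Everything else is routine, since the exact-solution convective term vanishes identically and no discrete inverse inequalities on $u_H$ are needed.
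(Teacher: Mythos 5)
Your proof is correct, but it follows a genuinely different route from the paper's. The paper keeps the convective term in the form $-\tfrac12(\nabla\cdot u_H,\,v_h\cdot v_h)$, inserts the $L^2$ projection $q_H$ of $v_h\cdot v_h$ onto $Q_{H,r-1}$ (which is invisible to $\nabla\cdot u_H$ by the discrete divergence-free constraint), replaces $u_H$ by $u_H-u$ using $\nabla\cdot u=0$, and then pays only the $H^1$ error $\|u-u_H\|_1\le CH^{r-1}t^{-(r-2)/2}$ times the projection defect $\|v_h\cdot v_h-q_H\|_{L^2}\le CH^\gamma\|v_h\cdot v_h\|_\gamma\le CH^\gamma\|v_h\|_1^2$; the dimension-dependent $\gamma$ enters through the fractional Sobolev estimate $\|v_h\cdot v_h\|_\gamma\le C\|v_h\|_1^2$ taken from Layton--Tobiska. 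You instead split $u_H=u+(u_H-u)$ inside the trilinear form, kill the $u$-part by integration by parts, and estimate $((w\cdot\nabla)v_h,v_h)$ with the $(4,2,4)$ H\"older pairing and the Ladyzhenskaya interpolation of $\|w\|_{L^4}$ between $\|w\|_0$ and $\|w\|_1$; here $\gamma$ emerges from interpolating the two Galerkin error rates $H^r$ and $H^{r-1}$ of~(\ref{eq:err_vel(t)}). The exponents agree exactly in both dimensions. Your argument is arguably more elementary (no discrete projection, no fractional norms of $v_h\cdot v_h$), at the cost of invoking the $L^2$ error bound $\|u-u_H\|_0\le CH^rt^{-(r-2)/2}$ in addition to the $H^1$ one; since the paper already assumes~(\ref{eq:err_vel(t)}) in full, nothing is lost. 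The only cosmetic caveat is the identification $\|v_h\|_1=\|\nabla v_h\|_0$, which is consistent with the norm the paper places on $H_0^1(\Omega)^d$.
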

\begin{proof} To prove the coercivity of $B^H$ we follow \cite[p.
2042]{Lay-Tob}. Let us first observe that for any $v_h\in V_{h,r}$
$$
B^H(v_h,v_h)=\nu \|\nabla v_h\|_0^2-\frac{1}{2}(\nabla \cdot u_H,v_h\cdot v_h).
$$
Let $q_H$ be the $L^2$ orthogonal projection of $v_h\cdot v_h$ over $Q_{H,r-1}$,
so that applying standard finite-element theory \cite{Ciarlet} and interpolation theory
on Hilbert spaces (see e.\ g.~\cite[\S~II.2]{Temam} we have
$\|v_h\cdot v_h-q_H\|_{L^2(\Omega)/{\Bbb R}} \le CH^{\gamma}
\left\|v_h\cdot v_h\right\|_{\gamma}$, for $\gamma\in (0,1]$.
 Taking into account that the velocity $u$
satisfies $\nabla \cdot u=0$ then
\begin{equation}\label{pri_coer}
B^H(v_h,v_h)=\nu \|\nabla v_h\|_0^2-\frac{1}{2}(\nabla \cdot (u_H-u),v_h\cdot v_h-q_H).
\end{equation}
And then
$$
\left|(\nabla \cdot (u_H-u),v_h\cdot v_h-q_H)\right|\le C\|u_H-u\|_1\|v_h\cdot v_h-q_H\|_{L^2(\Omega)/{\Bbb R}}.
$$
Following \cite[p. 2042]{Lay-Tob} we get
\begin{equation}\label{law_h}
\|v_h\cdot v_h\|_\gamma\le C \|v_h\|_1^2,
\end{equation}
where $\gamma=1/2$ if $d=2$, and $\gamma=1/4$ if $d=3$.
Using (\ref{law_h}) together with (\ref{eq:err_vel(t)}) we get
$$
\left|(\nabla \cdot (u_H-u),v_h\cdot v_h-q_H)\right|\le C\frac{H^{r-1}}{t^{(r-2)/2}} H^\gamma\|v_h\|_1^2.
$$
Finally, going back to (\ref{pri_coer}) we reach (\ref{coer_B^H}).
\end{proof}
Let us observe that,  for $t>0$ and
$H<(t^{(r-2)/2}\nu/C)^{1/(r-1+\gamma)}$, as a consequence of
Lemma~\ref{lemacoer}, there exists a unique $\tilde u_h\in V_{h,r}$
satisfying (\ref{pos_pro}).

We introduce now a linearized problem that will be used in the proof
of Theorem 1 where we state the rate of convergence of the new
method. Let $u$ be the velocity in the solution $(u,p)$ of
(\ref{onetwo})-(\ref{ic}). We will denote by $(v,j)$ the solution of
the following linearized problem
\begin{eqnarray}\label{linearizado}
-\nu \Delta v+(u\cdot \nabla )v+\nabla j&=&d\\
{\rm div}(v)&=&0\nonumber
\end{eqnarray}
in the domain $\Omega$ subject to homogeneous Dirichlet boundary conditions. Let us observe that since the divergence
of $u$ is zero the bilinear form:
$$
B(v,w)=\nu (\nabla v,\nabla w)+((u\cdot \nabla)v,w),\quad v,w\in V.
$$
 associated to this problem is continuous and coercive. Since the solution $v\in V$ of (\ref{linearizado}) satisfies
 $$
 B(v,w)=(d,w),\quad \forall w\in V
 $$
 by the Lax-Milgram
theorem there exists a unique solution $v$. Due to (\ref{inf-sup}) there exists also a unique pressure $j$.

We will assume in the sequel that both problem (\ref{linearizado}) and its dual problem satisfy the regularity assumption
\begin{equation}\label{regu_dual}
\|v\|_2+\|j\|_{H^1(\Omega)/{\Bbb R}}\le C \|d\|_0.
\end{equation}
The regularity assumption (\ref{regu_dual}) can be proved by using the analogous regularity of the Stokes problem and a bootstrap
argument, see \cite[Remark 2.1]{Lay-Tob}.

In the following lemma we state the rate of convergence of the mixed finite-element approximation to the solution $(v,j)$ of (\ref{linearizado})
defined as follows: find $(v_h,j_h)\in (X_{h,r},Q_{h,r-1})$ such that
\begin{eqnarray}
\nu(\nabla v_h,\nabla \phi_h)+((u\cdot \nabla)v_h,\phi_h)+(\nabla j_h,\phi_h)&=&(d,\phi_h),\ \forall \phi_h\in X_{h,r},\label{apro_lin1}\\
(\nabla \cdot v_h,\psi_h)&=&0,\quad \ \ \quad\forall \psi_h\in Q_{h,r-1}\label{apro_lin2}.
\end{eqnarray}
\begin{lemma} \label{le:comparo} Let $(v,j)$ be the solution of (\ref{linearizado}) and let $(v_h,j_h)$ be its mixed finite-element approximation. Then, the
following bounds hold for $2\le l\le r$
\begin{eqnarray}\label{cota_vel_lin}
\|v-v_h\|_0+h\|v-v_h\|_1&\le& C h^l\left(\|v\|_l+\|j\|_{H^{l-1}/{\Bbb R}}\right),\\
\|j-j_h\|_{L^2/{\Bbb R}}&\le& C h^{l-1}\left(\|v\|_l+\|j\|_{H^{l-1}/{\Bbb R}}\right)\label{cota_pre_lin}.
\end{eqnarray}
\end{lemma}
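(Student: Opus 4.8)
The plan is to derive the error bounds for the Oseen-type problem \eqref{apro_lin1}--\eqref{apro_lin2} by reducing them to the already-established bounds for the Stokes projection, \eqref{stokespro}--\eqref{stokespre}. The bilinear form $B(\cdot,\cdot)$ differs from the Stokes bilinear form only by the convection term $((u\cdot\nabla)v,w)$, so the strategy is to treat this term as a lower-order perturbation. First I would introduce the Stokes projection $s_h=S_h(v)\in V_{h,r}$ of the exact solution $v$, which by \eqref{stokespro} satisfies $\|v-s_h\|_0+h\|v-s_h\|_1\le Ch^l(\|v\|_l+\|j\|_{H^{l-1}/\mathbb{R}})$, together with its associated discrete pressure $q_h$ satisfying \eqref{stokespre}. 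The key observation is that, since $v$ solves the Stokes problem with right-hand side $g=d-(u\cdot\nabla)v$ (rewriting \eqref{linearizado}), the Stokes projection $s_h$ is a good approximation to $v$; it then remains only to estimate the discrete error $e_h=v_h-s_h$.

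The central step is to obtain a bound on $\|e_h\|_1$, and here the coercivity of $B$ on $V$ (hence on $V_{h,r}$, since the convection term satisfies $((u\cdot\nabla)w,w)=0$ for divergence-free $u$) is what drives the argument. Testing the defining equations of $v_h$ and of the Stokes projection against discretely divergence-free functions and subtracting, I would derive a Galerkin-type relation for $e_h$. In that relation the Stokes-projection property \eqref{stokesnew} cancels the viscous and pressure terms exactly, leaving $\nu\|\nabla e_h\|_0^2 \le |((u\cdot\nabla)(v-s_h),e_h)| + |((u\cdot\nabla)(s_h-v_h),\ldots)|$-type contributions; the convection term involving $v-s_h$ is bounded using the continuity of the trilinear form together with the regularity $\|u\|_2\le M_2$ from \eqref{eq:M_1} and the $H^1$ estimate on $v-s_h$. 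This yields $\|e_h\|_1\le Ch^{l-1}(\|v\|_l+\|j\|_{H^{l-1}/\mathbb{R}})$, and combining with the triangle inequality $\|v-v_h\|_1\le\|v-s_h\|_1+\|e_h\|_1$ gives the $H^1$ half of \eqref{cota_vel_lin}.

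For the $L^2$ velocity estimate I would use the Aubin--Nitsche duality argument, which is precisely where the dual regularity assumption \eqref{regu_dual} enters. The plan is to introduce the solution $(w,\pi)$ of the \emph{dual} linearized problem with right-hand side $v-v_h$, for which \eqref{regu_dual} gives $\|w\|_2+\|\pi\|_{H^1/\mathbb{R}}\le C\|v-v_h\|_0$; testing the error equation against the dual solution and its Stokes projection, then invoking the already-proved $H^1$ bound, produces the extra factor of $h$ needed to reach the optimal $O(h^l)$ rate in $L^2$. The pressure bound \eqref{cota_pre_lin} follows from the discrete inf-sup condition \eqref{lbbh}: writing the full (non-projected) equation \eqref{apro_lin1} and using \eqref{stokespre} for $j-q_h$, one bounds $\|j_h-q_h\|_{L^2/\mathbb{R}}$ by $C\beta^{-1}\sup_{\phi_h}\frac{(\nabla(j_h-q_h),\phi_h)}{\|\phi_h\|_1}$, the supremum being controlled by the viscous and convective residuals already estimated in the $H^1$ step.

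The main obstacle I anticipate is handling the convection term cleanly in the duality argument for the $L^2$ bound: the non-symmetry of $B$ means the convection term must be transferred onto the dual solution $w$, and one must verify that the trilinear form estimates combine with \eqref{regu_dual} to gain the full power of $h$ rather than only $h^{1/2}$. Getting the correct dependence on $\|u\|_2$ and confirming that no suboptimal factor is lost in the passage through $V_{h,r}$ (which is not contained in $V$, so consistency errors from the discretely-divergence-free condition must be tracked) is the delicate point; everything else is a routine adaptation of the standard Oseen finite-element analysis.
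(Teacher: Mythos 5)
Your proposal follows essentially the same route as the paper's proof: the Stokes projection $s_h=S_h(v)$ with right-hand side $g=d-(u\cdot\nabla)v$, an energy/coercivity argument for the discrete error exploiting that $((u\cdot\nabla)w,w)=0$ since $u$ is exactly divergence-free, an Aubin--Nitsche duality argument under \eqref{regu_dual} for the $L^2$ bound (including the key integration by parts that transfers the convection onto the dual solution $w$ so as to use the $L^2$ Stokes-projection estimate), and the inf-sup condition \eqref{lbbh} applied to $j_h-q_h$ for the pressure. The only immaterial deviations are using the Stokes projection of the dual solution instead of its mixed finite-element approximation and invoking $\|u\|_2$ where the paper uses $\|u\|_{1/2}$ via Sobolev embedding.
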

\begin{proof}
Let us denote by $s_h=S_h(v)$ the Stokes projection of $v$. More precisely, $(s_h,q_h)\in (X_{h,r},Q_{h,r-1})$ will be the solution of
(\ref{stokesnew})-(\ref{stokesnew2}) with right-hand-side $g=d-(u\cdot \nabla)v$. Let us denote by $e_h=s_h-v_h$.
Then, from (\ref{apro_lin1}) and (\ref{stokesnew}) we get
\begin{eqnarray}\label{eq:error}
\nu(\nabla e_h,\nabla w_h)+((u\cdot \nabla)e_h,w_h)=((u\cdot \nabla)(s_h-v),w_h),\quad \forall w_h\in V_{h,r}.
\end{eqnarray}
Taking $w_h=e_h$ in (\ref{eq:error}) and using (\ref{sob1}) we get
$$
\nu \|e_h\|_1^2\le C\|u\|_{L^{2d/(d-1)}}\|s_h-v\|_1\|e_h\|_{L^{2d}}\le C\|u\|_{1/2}\|s_h-v\|_1\|e_h\|_1,
$$
so that
\begin{eqnarray}\label{cota:e_h}
\|e_h\|_1\le C\|s_h-v\|_1.
\end{eqnarray}
Since $\|v-v_h\|_1\le \|v-s_h\|_1+\|e_h\|_1$ applying (\ref{stokespro}) we conclude $h\|v-v_h\|_1$ is bounded by the righ-hand side of~(\ref{cota_vel_lin}). The bound (\ref{cota_pre_lin}) for the pressure
is readily obtained by means of the auxiliary value $k_h=q_h-j_h$.
Subtracting (\ref{apro_lin1}) from~(\ref{cota_vel_lin}) and applying the inf-sup condition (\ref{lbbh}) one easily gets
$$
\beta \|k_h\|_{L^2/{\Bbb R}}\le \nu \|e_h\|_1+C\|u\|_{1/2}\|s_h-v\|_1,
$$
so that due to~(\ref{cota:e_h}) and~(\ref{stokespro}) it follows
that $\|k_h\|_{L^2/{\Bbb R}}$ is bounded by the right-hand side
of~(\ref{cota_pre_lin}). Since $\|j-j_h\|_{L^2/{\Bbb R}}\le
 \|j-q_h\|_{L^2/{\Bbb R}}+\|k_h\|_{L^2/{\Bbb R}}$, applying (\ref{stokespre}) we finally
prove (\ref{cota_pre_lin}).

We are left with the task of proving the bound for the $L^2$ norm of the error in the velocity. We will argue by duality.
Let us observe that
\begin{equation}
\label{eq:dual}
\|e_h\|_0=\sup_{\varphi\in L^2\ \varphi\ne 0}\frac{|(e_h,\varphi)|}{\|\varphi\|_0}.
\end{equation}
Let us fix $\varphi\in L^2$ and let us denote by $(w,k)$ the solution of the linearized dual problem
\begin{equation}
\label{eq:law}
\begin{array}{rcl}
\left.
\begin{array}{r}
-\nu \Delta w-(u\cdot \nabla )w+\nabla k=\varphi,\\
{\rm div}(w)=0,\end{array}\right\}&\quad&\hbox{\rm in $\Omega$},\\
u=0,\quad\,\,&&\hbox{\rm on $\partial\Omega$}.
\end{array}
\end{equation}
As stated before we assume that this problem
satisfies the regularity assumption~(\ref{regu_dual}), so that
\begin{equation}\label{regu_dual2}
\|w\|_2+\|k\|_{H^1(\Omega)/{\Bbb R}}\le C \|\varphi\|_0.
\end{equation}
We will denote by $(w_h,k_h)\in (X_{h,r},Q_{h,r-1})$ the mixed finite-element approximations to $(w,k)$. Reasoning exactly as before
and applying (\ref{regu_dual2}) we obtain
\begin{eqnarray}\label{cota_vel_lin_du}
\|w-w_h\|_1&\le& C h\left(\|w\|_2+\|k\|_{H^{1}/{\Bbb R}}\right)\le C h\|\varphi\|_0,\\
\|k-k_h\|_{L^2/{\Bbb R}}&\le& C h\left(\|w\|_2+\|k\|_{H^1/{\Bbb R}}\right)\le C h\|\varphi\|_0\label{cota_pre_lin_du}.
\end{eqnarray}
Integrating by parts we reach
\begin{eqnarray*}
(e_h,\varphi)&=&\nu(\nabla e_h,\nabla w)+((u\cdot \nabla)e_h,w)-((\nabla \cdot e_h),k)\nonumber\\
&=&\nu(\nabla e_h,\nabla (w-w_h))+((u\cdot \nabla)e_h,w-w_h)-((\nabla \cdot e_h),k-k_h)\nonumber\\
&&\quad +\nu(\nabla e_h,\nabla w_h)+((u\cdot \nabla)e_h,w_h).
\end{eqnarray*}
And then, applying (\ref{cota_vel_lin_du}) and (\ref{cota_pre_lin_du}) we reach
\begin{eqnarray}
|(e_h,\varphi)|&\le& C\nu \|e_h\|_1 h \|\varphi\|_0+C \|u\|_{1/2}\|e_h\|_1 h\|\varphi\|_0+C\|e_h\|_1 C h\|\varphi\|_0\nonumber\\
&&\quad + |\nu(\nabla e_h,\nabla w_h)+((u\cdot \nabla)e_h,w_h)|.
\label{eq:laref}
\end{eqnarray}
Then, to conclude, it only remains to bound $|\nu(\nabla e_h,\nabla w_h)+((u\cdot \nabla)e_h,w_h)|$ which by (\ref{eq:error})
is equal to $|((u\cdot \nabla)(s_h-v),w_h)|$.
Let us decompose
$$
|((u\cdot \nabla)(s_h-v),w_h)|\le |((u\cdot \nabla)(s_h-v),w_h-w)|+|((u\cdot \nabla)(s_h-v),w)|.
$$
Then, integrating by parts in the last term
\begin{eqnarray*}
|((u\cdot \nabla)(s_h-v),w_h)|\le C\|u\|_{1/2}\|s_h-v\|_1\|w_h-w\|_1+|((u\cdot \nabla)w,s_h-v)|,
\end{eqnarray*}
and the bound for the first term on the right hand side above concludes by applying (\ref{stokespro}) and (\ref{cota_vel_lin_du}).
Finally, since
$$
|((u\cdot \nabla)w,s_h-v)|\le C\|u\|_{L^{2d/(d-1)}}\|\nabla w\|_{L^{2d}}\|s_h-v\|_0.
$$
Applying Sobolev inequality (\ref{sob1}) together with (\ref{regu_dual2}) and (\ref{stokespro}) we reach
$$
|((u\cdot \nabla)w,s_h-v)|\le C \|u\|_{1/2}\|\varphi\|_0 h^l\left(\|v\|_l+\|j\|_{H^{l-1}/{\Bbb R}}\right),
$$
so that the proof is finished.
\end{proof}
We now state some results that will be use to get the rate of convergence of the new postprocessed method. The proof of the
following lemma can be found in \cite[Lemma 4]{jbj_NS_apos} for the case $r=2$ and in \cite[Lemma 5.1]{jbj_regularity}
for $r=3,4$.
\begin{lemma}\label{le:z_t}
Let $(u,p)$ be the solution of {\rm
(\ref{onetwo})--(\ref{ic})} and let $u_H$ be the mixed finite-element approximation to $u$. Then, there exists a positive
constant $C$ such that
\begin{eqnarray}\label{temporal_menos1}
\|u_t(t)-\dot u_H(t)\|_{-1} &\le&  \frac{C}{t^{(r-1)/2}} H^{r}\left|\log(H)\right|^{r'}, \ t\in(0,T], \
r=2,3,4,\qquad\\\label{temporal_menos2} \|A^{-1}\Pi\left(u_t(t)-\dot u_H(t)\right)\|_0 &\le&
\frac{C}{t^{(r-1)/2}} H^{r+1}\left|\log(H)\right|, \ t\in(0,T], \ r=3,4,
\end{eqnarray}
where $r'=2$ when $r=2$ and $r'=1$ otherwise.
\end{lemma}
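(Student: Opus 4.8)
The plan is to bound both quantities by duality, representing each negative norm as a supremum of a pairing against a test function and estimating that pairing termwise. The first point to stress is that the argument should avoid differentiating (\ref{onetwo}) in time: the function $\dot u_H(t)$ is available directly from the discrete momentum equation (\ref{ten}), so no control of $u_{tt}$ (which is strongly singular as $t\to0$) is needed. Writing $e=u-u_H=(u-s_H)+(s_H-u_H)=:\rho+\theta$ with $s_H=S_H(u)$ the Stokes projection, I would first derive the error equation by testing the weak form of (\ref{onetwo}) and the discrete equations (\ref{ten})--(\ref{ten2}) against the same $\chi_H\in V_{h,r}$ and subtracting. Since $\chi_H$ is only discretely divergence free, a term $(\nabla p,\chi_H)$ survives, but by the definition (\ref{stokesnew})--(\ref{stokesnew2}) of the Stokes projection one has $\nu(\nabla\rho,\nabla\chi_H)=(p-\pi_H,\nabla\cdot\chi_H)=-(\nabla p,\chi_H)$ for every $\pi_H\in Q_{h,r-1}$, so the viscous contribution of $\rho$ cancels this pressure term exactly. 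What remains is the clean identity $(\dot u_H-u_t,\chi_H)=\nu(\nabla\theta,\nabla\chi_H)+b(u,u,\chi_H)-b(u_H,u_H,\chi_H)$ for all $\chi_H\in V_{h,r}$, in which only the finite-element quantity $\theta$ and the convective terms appear.

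To reach a general $\phi$ I would split $(u_t-\dot u_H,\phi)=(u_t-\dot u_H,\phi_H)+(u_t-\dot u_H,\phi-\phi_H)$, using the identity above for the first pairing and an approximation/orthogonality estimate for the remainder. The key difficulty is that a direct bound of $\nu(\nabla\theta,\nabla\phi_H)$ and of the linearized convective terms $b(e,u,\phi_H)+b(u_H,e,\phi_H)$ only gives $O(H^{r-1})$, because $\|\theta\|_1$ and $\|e\|_1$ are of that order by (\ref{eq:err_vel(t)}). The extra power of $H$ has to be produced by a second, Aubin--Nitsche-type duality, carried out exactly as in the proof of Lemma~\ref{le:comparo}: one introduces the dual Oseen problem, invokes the regularity assumption (\ref{regu_dual}) to obtain an $H^2$-regular dual solution, and trades the factor $\|\theta\|_1$ against the $O(H)$ approximation of that dual solution. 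The pressure error (\ref{eq:err_pre(t)}) and the Stokes-projection bounds (\ref{stokespro})--(\ref{stokespre}) control the consistency remainders, while the quadratic term $b(u-u_H,u-u_H,\cdot)$ is of higher order and is dominated using (\ref{eq:err_vel(t)}) and the Sobolev inequality (\ref{sob1}).

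The time weight $t^{-(r-1)/2}$ is inherited from the pointwise regularity (\ref{eq:u-inf}) and from the weights already present in (\ref{eq:err_vel(t)})--(\ref{eq:err_pre(t)}). The logarithmic factors are exactly where the absence of nonlocal compatibility conditions makes itself felt: after the duality step one is left to absorb a contribution that behaves like $s^{-1}$ near $s=0$, whose admissible range is cut off at a power of $H$, so that the weighted integrated regularity (\ref{eq:u-int}) converts it into powers of $|\log(H)|$; the weaker approximation of the mini-element forces one additional logarithm when $r=2$, which is the source of $r'=2$ there and $r'=1$ for $r=3,4$. For the sharper bound (\ref{temporal_menos2}) I would run the same scheme but realize the norm by duality against $w=A^{-1}\Pi\psi\in H^2(\Omega)^d\cap V$, with $\|w\|_2\le C\|\psi\|_0$. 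Because the dual solution now carries two full derivatives, the remainder $\phi-\phi_H$ gains one further power of $H$ throughout and the rate improves to $H^{r+1}$; this additional regularity of the dual data is only usable when $r\ge3$, which explains the restriction to $r=3,4$ in (\ref{temporal_menos2}).

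The step I expect to be the main obstacle is the second one: squeezing the extra power of $H$ out of the viscous term $\nu(\nabla\theta,\nabla\phi_H)$ and the linearized convective terms, which are only $O(H^{r-1})$ if estimated directly. All of the gain to $H^{r}$ must come from the duality argument, and making that argument coexist with the time-singular regularity near $t=0$---so that only logarithmic rather than algebraic factors are lost---is the delicate part.
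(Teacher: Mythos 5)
First, a point of reference: the paper does not prove Lemma~\ref{le:z_t} itself; it is quoted with a pointer to \cite[Lemma 4]{jbj_NS_apos} (for $r=2$) and \cite[Lemma 5.1]{jbj_regularity} (for $r=3,4$), so your attempt must be measured against the proofs in those references. Your starting identity is correct and is indeed the standard point of departure: for $\chi_H\in V_{H,r}$, using that the Stokes projection absorbs the continuous pressure, one gets $(\dot u_H-u_t,\chi_H)=\nu(\nabla\theta,\nabla\chi_H)+b(u,u,\chi_H)-b(u_H,u_H,\chi_H)$ with $\theta=S_H(u)-u_H$.

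The gap is in the mechanism you propose for upgrading $O(H^{r-1})$ to $O(H^{r})$, namely a ``second Aubin--Nitsche-type duality'' performed statically at time $t$. This cannot work, and it is precisely the crux of the lemma. The supremum of $\nu(\nabla\theta,\nabla\phi_H)$ over discretely divergence-free $\phi_H$ with $\|\phi_H\|_1\le 1$ is comparable to $\nu\|\nabla\theta\|_0$ (take $\phi_H$ proportional to $\theta$), so no choice of dual problem can extract an extra power of $H$ from that term: elliptic duality improves $L^2$ norms of errors of elliptic projections, not $H^1$ pairings of two arbitrary discrete functions. The extra power of $H$ in (\ref{temporal_menos1}) actually comes from a superconvergence bound of the type $\|\theta(t)\|_1\le C H^{r}|\log H|\,t^{-(r-1)/2}$, together with companion bounds on $\dot\theta$ in negative norms, and these are proved in the cited references by genuinely evolutionary arguments: one time-differentiates the parabolic error equation satisfied by $\theta$, tests with $\theta$, $\dot\theta$, $A_H^{-1}\dot\theta$, $A_H^{-2}\dot\theta$, and integrates over $(0,t)$ with the weights $\sigma_n(s)=s^n$ of (\ref{eq:u-int}). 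Your proposal contains no time integration at all, which exposes a second symptom of the same gap: you attribute the logarithmic factors to a contribution behaving like $s^{-1}$ near $s=0$, but in a static argument at fixed $t$ there is no integral in $s$ to produce them; they arise exactly from the borderline weighted time integrals in the energy estimates (and the extra logarithm for $r=2$ from the corresponding estimate for the mini-element). Finally, the remainder $(u_t-\dot u_H,\phi-\phi_H)$ that you dismiss with ``an approximation/orthogonality estimate'' requires a bound on $\|u_t-\dot u_H\|_0$, i.e.\ on $\|\dot\theta\|_0$ and $\|\dot\rho\|_0$, which is again part of the evolutionary machinery you have not set up; note also that the naive bound $\|\dot\rho\|_0\le CH^{r}\bigl(\|u_t\|_r+\|p_t\|_{H^{r-1}/\mathbb{R}}\bigr)$ costs $t^{-r/2}$ by (\ref{eq:u-inf}) with $k=r+2$, worse than the stated $t^{-(r-1)/2}$, so even the projection part needs a more careful (negative-norm, time-weighted) treatment. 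In short: right first identity, but the essential ingredient --- time-weighted parabolic energy and superconvergence estimates for $\theta$ and $\dot\theta$ --- is missing and is replaced by a duality step that cannot deliver the claimed gain.
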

The proof of the following lemma can be found in \cite[p. 226]{jbj_regularity}.
\begin{lemma}\label{u_Hmenos1}
Let $(u,p)$ be the solution of {\rm
(\ref{onetwo})--(\ref{ic})} and let $u_H$ be the mixed finite-element approximation to $u$. Then, there exists a positive
constant $C$ such that
\begin{eqnarray}\label{eq:u_Hmenos1}
\|u(t)- u_H(t)\|_{-1} &\le&  \frac{C}{t^{(r-1)/2}} H^{r+1}\left|\log(H)\right|, \ t\in(0,T], \quad r=3,4.
\end{eqnarray}
\end{lemma}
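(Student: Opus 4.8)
The plan is to estimate $\|u(t)-u_H(t)\|_{-1}$ by duality, testing against an arbitrary $\varphi\in H^1_0(\Omega)^d$ and bounding $(u-u_H,\varphi)/\|\varphi\|_1$. I split the error as $u-u_H=\rho+\theta$ with $\rho=u-S_H(u)$ the Stokes-projection error and $\theta=S_H(u)-u_H\in V_{H,r}$, and treat the two pieces separately. Throughout I use the pointwise bounds $\|u-u_H\|_0\le CH^r t^{-(r-2)/2}$ and $\|u-u_H\|_1\le CH^{r-1}t^{-(r-2)/2}$ from (\ref{eq:err_vel(t)}), the regularity $\|u(t)\|_k\le M_k t^{1-k/2}$ for $k\le r$ from (\ref{eq:u-inf}), and, crucially, the sharp time-derivative estimate (\ref{temporal_menos2}). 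The restriction $r=3,4$ and the smoothness of $\Omega$ enter because the dual solutions will be needed in $H^3$ and the velocity space must reproduce quadratics.

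For $\rho$, an Aubin--Nitsche argument gains one order in the $-1$ norm. Solving the dual Stokes problem $-\nu\Delta\Phi+\nabla\Psi=\varphi$, $\div\Phi=0$ and using the elliptic shift $\|\Phi\|_3+\|\Psi\|_{H^2/\mathbb R}\le C\|\varphi\|_1$ (this is where $r\ge3$ and a $C^m$, $m\ge3$, boundary are used), Galerkin orthogonality of the Stokes projection together with the higher-order approximation $\|\Phi-\Phi_H\|_1\le CH^2\|\Phi\|_3$ yields $\|\rho\|_{-1}\le CH^{r+1}(\|u\|_r+\|p\|_{H^{r-1}/\mathbb R})\le CH^{r+1}t^{-(r-2)/2}$, which is subdominant to the target bound.

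For $\theta$ the key observation is that, because the Stokes projection is defined by a steady problem at each $t$, subtracting the coarse discrete momentum equation (\ref{ten}) from the defining relations (\ref{stokesnew})--(\ref{stokesnew2}) of $S_H(u)$ (with data $g=f-u_t-(u\cdot\nabla)u$) gives a \emph{static} discrete identity
\[
\nu(\nabla\theta,\nabla v_H)=-(u_t-\dot u_H,v_H)-b(u,e,v_H)-b(e,u_H,v_H),\qquad v_H\in V_{H,r},
\]
with no time derivative of $\theta$. I then bound $(\theta,\varphi)$ by duality against the adjoint linearized (Oseen) problem $-\nu\Delta w-(u\cdot\nabla)w+\nabla\pi=\varphi$, $\div w=0$, whose coarse mixed approximation $w_H$ satisfies, by Lemma~\ref{le:comparo} with $l=3$, the bounds $\|w-w_H\|_1\le CH^2\|\varphi\|_1$ and $\|w-w_H\|_0\le CH^3\|\varphi\|_1$. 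Writing $(\theta,\varphi)=\nu(\nabla\theta,\nabla(w-w_H))+\nu(\nabla\theta,\nabla w_H)-((u\cdot\nabla)w,\theta)-(\pi,\div\theta)$ and inserting the static identity for $\nu(\nabla\theta,\nabla w_H)$, the choice of the Oseen (rather than Stokes) adjoint is precisely what makes the terms with $u$ in the convecting slot collapse onto $w-w_H$. The dominant surviving term is $-(u_t-\dot u_H,w)=-(A^{-1}\Pi(u_t-\dot u_H),Aw)$, which by (\ref{temporal_menos2}) is bounded by $CH^{r+1}|\log H|t^{-(r-1)/2}\|w\|_2\le CH^{r+1}|\log H|t^{-(r-1)/2}\|\varphi\|_1$; this single term is the origin of both the logarithmic factor and the weight $t^{-(r-1)/2}$ in the statement. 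The consistency term $(\pi,\div\theta)$ and the divergence contributions are controlled using $(\div\theta,\psi_H)=0$ together with a higher-order $L^2$-projection of the smooth factor $u\cdot w$, again using $r\ge3$.

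The main obstacle is the remaining nonlinear term in which the error sits in the convecting slot, namely $b(e,u,w_H)$. After rewriting it as a pairing of the error against the smooth field $(\nabla u)^{\!\top}w$, the part coming from $\rho$ is harmless, since $(\rho,(\nabla u)^{\!\top}w)\le\|\rho\|_{-1}\,\|(\nabla u)^{\!\top}w\|_1\le CH^{r+1}\|u\|_3\|\varphi\|_0$, but the part $(\theta,(\nabla u)^{\!\top}w)$ cannot be closed by the static argument alone: it would require an a priori bound on $\|\theta\|_{-1}$ carrying an $O(t^{-1/2})$ coefficient, which does not absorb. I expect this to be the genuine difficulty. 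Resolving it requires either recursing once more through the static identity for this specific smooth right-hand side, or---more robustly---replacing the steady adjoint by the backward-in-time linearized dual problem over $(0,t]$ and closing the convective coupling by a Gronwall argument in the spirit of Heywood--Rannacher, which is also consistent with the appearance of the weight $t^{-(r-1)/2}$ through the weighted regularity integrals (\ref{eq:u-int}). Once this term is under control, collecting the $\rho$ and $\theta$ bounds gives $\|u-u_H\|_{-1}\le C t^{-(r-1)/2}H^{r+1}|\log H|$.
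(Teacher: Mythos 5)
First, a point of reference: the paper does not prove Lemma~\ref{u_Hmenos1} at all --- it quotes it from \cite[p.~226]{jbj_regularity} --- so the comparison below is necessarily with the argument of that reference rather than with a proof contained in the present text.

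Your skeleton is the right one and matches the standard route: the splitting $u-u_H=\rho+\theta$ with $\rho=u-S_H(u)$ and $\theta=S_H(u)-u_H$, the Aubin--Nitsche gain giving $\|\rho\|_{-1}\le CH^{r+1}t^{-(r-2)/2}$ (correctly identified as the place where $r\ge 3$ and the smooth boundary enter), and the role of~(\ref{temporal_menos2}) in producing the factor $H^{r+1}|\log H|\,t^{-(r-1)/2}$. The static identity you derive for $\theta$ is also correct. But the term you flag at the end, $(\theta,(\nabla u)^{\top}w)$ (equivalently $(\theta,(u\cdot\nabla)w)$ after skew-symmetrization), is not a deferrable technicality: it is the entire content of the lemma, and the static elliptic duality you set up cannot close it, for exactly the reason you state --- the coefficient multiplying $\|\theta\|_{-1}$ on the right is of size $C\|u\|_{2}/\nu$ with no smallness to absorb, while the crude alternative $\|\theta\|_0\|(u\cdot\nabla)w\|_0$ only yields $O(H^r)$ from~(\ref{eq:err_vel(t)}). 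What is actually needed at this point is the superconvergence estimate $\|S_H(u)(t)-u_H(t)\|_0\le C\,t^{-(r-1)/2}H^{r+1}|\log H|$, which is one of the main technical results of \cite{jbj_regularity} (following \cite{heyran2}) and is proved by a genuinely \emph{evolutionary} argument: $\theta$ satisfies a discrete parabolic equation driven by $\dot\rho$ and the nonlinear commutators, and the bound comes from Duhamel's principle, the smoothing of the discrete Stokes semigroup, and the weighted integrals~(\ref{eq:u-int}); the logarithm originates there (as it does in~(\ref{temporal_menos2}), which is a by-product of the same machinery), not from a single static term. Your second fallback --- backward-in-time dual plus Gronwall ``in the spirit of Heywood--Rannacher'' --- is indeed the correct direction, but it is precisely the part of the proof you have not carried out.

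Once that superconvergence bound is in hand, the lemma follows in one line and none of the Oseen-adjoint apparatus is needed: $\|u-u_H\|_{-1}\le\|u-S_H(u)\|_{-1}+C\|S_H(u)-u_H\|_{0}$, with the first term bounded by $CH^{r+1}t^{-(r-2)/2}$ via one duality step beyond~(\ref{stokespro}) and the second by $C\,t^{-(r-1)/2}H^{r+1}|\log H|$. So the verdict is: correct decomposition, correct identification of where the difficulty sits, but the decisive estimate is missing, and the static route you propose cannot supply it on its own.
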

We end this section with a theorem that states the rate of convergence of the new postprocessed method.
\begin{theorem}\label{teo}
Let $(u,p)$ be  the solution  of {\rm (\ref{onetwo})--(\ref{ic})} and
for $r=2,3,4$ let {\rm(\ref{eq:u-inf})--(\ref{eq:u-int})} hold with $k=r+2$. Then,
there exist a positive constant $C$ such that the new postprocessed approximation $(\tilde u_h(t),\tilde p_h(t))$ defined by (\ref{pos11})-(\ref{pos12})
satisfies the following bounds for $t\in (0,T]$ and $H$ small enough:
\begin{eqnarray}\label{velo_lin}
\|u(t)-\tilde u_h(t)\|_1&\le& C h+\frac{C}{t^{1/2}}H^2 | \log(H)|^2,\quad r=2,\\\label{velo_cuad_cub}
\|u(t)-\tilde u_h(t)\|_j&\le& \frac{C}{t^{(r-2)/2}}h^{r-j}+\frac{C}{t^{(r-1)/2}}H^{r+1-j}|\log(H)|,\ j=0,1, \ r=3,4.\quad\\
\label{pre_todos}
\|p(t)-\tilde p_h(t)\|_{L^2/{\Bbb R}}&\le& \frac{C}{t^{(r-2)/2}}h^{r-1}+\frac{C}{t^{(r-1)/2}}H^r|\log(H)|^{r'},\quad r=2,3,4,
\end{eqnarray}
where $r'=2$ for $r=2$ and $r'=1$ otherwise.
\end{theorem}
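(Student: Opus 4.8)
The plan is to compare the postprocessed velocity $\tilde u_h$ with the mixed finite-element approximation $(v_h,j_h)$ of the linearized problem (\ref{linearizado}) corresponding to the particular data $d=f-u_t$. For this choice the solution of (\ref{linearizado}) is exactly $(v,j)=(u,p)$, so Lemma~\ref{le:comparo} applied with $l=r$, together with the regularity bound (\ref{eq:u-inf}) for $k=r$ (which gives $\|u\|_r+\|p\|_{H^{r-1}/{\Bbb R}}\le C t^{1-r/2}$), produces $\|u-v_h\|_0+h\|u-v_h\|_1\le C h^r t^{1-r/2}$ and $\|p-j_h\|_{L^2/{\Bbb R}}\le C h^{r-1}t^{1-r/2}$. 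These account for all the $h$-terms on the right-hand sides of (\ref{velo_lin})--(\ref{pre_todos}), so the theorem reduces to bounding the remainder $E_h=\tilde u_h-v_h\in V_{h,r}$ (and, for the pressure, $\tilde p_h-j_h$).

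Subtracting the $V_{h,r}$-projected equation (\ref{pos_pro}) for $\tilde u_h$ from the corresponding projection of (\ref{apro_lin1}) and rearranging the convective terms gives, for every $w_h\in V_{h,r}$, the error identity $B^H(E_h,w_h)=(u_t-\dot u_H,w_h)-(((u_H-u)\cdot\nabla)v_h,w_h)$, with $B^H$ the form (\ref{BH}). Taking $w_h=E_h$ and invoking the coercivity of $B^H$ (Lemma~\ref{lemacoer}, which forces the hypothesis that $H$ be small) bounds $\tfrac\nu2\|E_h\|_1^2$ by the two terms on the right. The first is controlled by $\|u_t-\dot u_H\|_{-1}\|E_h\|_1$ and Lemma~\ref{le:z_t}, contributing precisely the $t^{-(r-1)/2}H^r|\log H|^{r'}$ term. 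For the convective term I would split $v_h=u+(v_h-u)$: the piece $(((u_H-u)\cdot\nabla)u,E_h)$ is rewritten as a pairing $(u_H-u,g)$ with $g=(\nabla u)^{T}E_h$ and estimated, for $r=3,4$, by $\|u-u_H\|_{-1}\|g\|_1$ using Lemma~\ref{u_Hmenos1}, which produces an $H^{r+1}$ factor; the piece $(((u_H-u)\cdot\nabla)(v_h-u),E_h)$ is bounded directly by H\"older, the embeddings (\ref{sob1}), (\ref{eq:err_vel(t)}) and the $h$-estimate for $v_h-u$, and is of still higher order in $H$. Since Lemma~\ref{u_Hmenos1} is available only for $r=3,4$, the case $r=2$ is treated separately: there the negative-norm superconvergence is replaced by the direct bound $\|u-u_H\|_0\le CH^2$ from (\ref{eq:err_vel(t)}), which is exactly what limits the coarse-mesh rate in (\ref{velo_lin}) to $H^2|\log H|^2$ and precludes an improved $L^2$ velocity estimate.

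For the $L^2$ velocity bound when $r=3,4$ I would run an Aubin--Nitsche duality argument mirroring the second half of the proof of Lemma~\ref{le:comparo}: fix $\varphi\in L^2$, introduce the dual solution $(w,k)$ of (\ref{eq:law}) and its finite-element approximation, and expand $(E_h,\varphi)$ using the error identity above. The gain of one extra power of $H$ (yielding the $H^{r+1}|\log H|$ term) comes from using the sharper estimate (\ref{temporal_menos2}) for the time-derivative error, via $(u_t-\dot u_H,w)=(A^{-1}\Pi(u_t-\dot u_H),Aw)$ together with $\|w\|_2\le C\|\varphi\|_0$, and the sharper bound (\ref{eq:u_Hmenos1}) for the convective contribution. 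The pressure estimate (\ref{pre_todos}) then follows by writing the difference of (\ref{pos11}) and (\ref{apro_lin1}), so that $(\nabla(\tilde p_h-j_h),\phi_h)$ equals velocity terms already estimated, applying the inf-sup condition (\ref{lbbh}) to get $\|\tilde p_h-j_h\|_{L^2/{\Bbb R}}\le C(\nu\|E_h\|_1+\|u_t-\dot u_H\|_{-1}+\dots)$, and adding $\|p-j_h\|_{L^2/{\Bbb R}}$.

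I expect the convective term $(((u_H-u)\cdot\nabla)v_h,E_h)$ to be the crux. The naive H\"older bound yields only $H^{r-1/2}$ (in two dimensions) or $H^{r-3/4}$ (in three), which is insufficient; recovering the full order requires the negative-norm superconvergence of $u-u_H$ supplied by Lemmas~\ref{le:z_t} and~\ref{u_Hmenos1} (which is where the enhanced regularity $k=r+2$ enters), and this is precisely the ingredient unavailable at $r=2$. A secondary technical point is that the error form $B^H$ carries the non-smooth convection field $u_H$, whereas the regularity (\ref{regu_dual}) used in the duality argument is stated for the smooth field $u$; the difference must be carried along as additional, higher-order terms. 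Finally one must check that the worse powers of $t$ generated by the convective estimates, coming from $\|u\|_3\le Ct^{-1/2}$-type factors, are absorbed by the spare powers of $H$, using that $H$ is taken small.
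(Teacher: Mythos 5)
Your proposal is correct and follows essentially the same route as the paper: the same comparison with the mixed finite-element approximation $(v_h,j_h)$ of the linearized problem with data $d=f-u_t$, the same error identity and coercivity argument via Lemma~\ref{lemacoer} for the $H^1$ bound, the same duality argument using (\ref{temporal_menos2}) and Lemma~\ref{u_Hmenos1} for the $L^2$ bound, and the inf-sup condition for the pressure. The only minor deviation is in the convective term $(((u_H-u)\cdot\nabla)u,e_h)$ of the $H^1$ estimate, where the paper simply uses $\|u_H-u\|_0\|u\|_{3/2}\|e_h\|_1$ with the $L^2$ bound (\ref{eq:err_vel(t)}) rather than your negative-norm pairing, which already yields the required $H^r$ rate without the extra $t^{-1/2}$ you would then need to absorb.
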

\begin{proof}
Let us consider the linearized problem (\ref{linearizado}) with right hand side $d=f-u_t$. Then, the solution $(v,j)$ of (\ref{linearizado})
is the solution $(u,p)$ of (\ref{onetwo})--(\ref{ic}). Let us denote by $(v_h,j_h)$ its mixed finite-element approximation, that is
the solution of~(\ref{apro_lin1})--(\ref{apro_lin2}). This
approximation satisfy the error bounds (\ref{cota_vel_lin}) and (\ref{cota_pre_lin}) for $l=r$.
Let us decompose $u-\tilde u_h=(u-v_h)+(v_h-\tilde u_h)$ and $p-\tilde p_h=(p-j_h)+(j_h-\tilde p_h)$. To bound the first terms in
these two decompositions we will apply (\ref{cota_vel_lin}) and (\ref{cota_pre_lin}). In the rest of the proof we deal with the
other two terms.

Let us denote by $e_h=v_h-\tilde u_h$. Subtracting (\ref{pos11})
from~(\ref{apro_lin1}) it is easy to see that $e_h$ satisfies
\begin{eqnarray*}
\nu(\nabla e_h,\nabla \phi_h)+((u_H\cdot \nabla)e_h,\phi_h)=(\dot u_H-u_t,\phi_h)+(((u_H-u)\cdot \nabla)v_h,\phi_h),
\end{eqnarray*}
for all $\phi_h\in V_{h,r}$.
Taking $\phi_h=e_h$ in the above equation and applying (\ref{coer_B^H}) we get that for $H<(t^{(r-2)/2}\nu/C)^{1/(r-1+\gamma)}$ there
exists a constant $C$ such that
$$
\|e_h\|_1\le C\bigl(\|u_t-\dot u_H\|_{-1}+\|u_H-u\|_1\|v_h-u\|_1+\|u_H-u\|_0\|u\|_{3/2}\bigr),
$$
and applying (\ref{temporal_menos1}) from Lemma~\ref{le:z_t}, (\ref{eq:err_vel(t)}) and (\ref{cota_vel_lin})  we get
\begin{equation}\label{e_h1}
\|e_h\|_1\le  \frac{C}{t^{(r-1)/2}} H^{r}\left|\log(H)\right|^{r'}+\frac{C}{t^{(r-2)/2}}
\bigl( h
H^{r-1}+H^r\bigr),
\end{equation}
from which (\ref{velo_lin}) and the case $j=1$ in (\ref{velo_cuad_cub}) are concluded.

We now get the error bound for the pressure. Let us denote $r_h=j_h-\tilde p_h$. Subtracting~(\ref{pos11})
from~(\ref{apro_lin1}) and using (\ref{lbbh}) it is easy to obtain
\begin{eqnarray*}
\beta \|r_h\|_{L^2/{\Bbb R}}&\le& \nu \|e_h\|_1+C\|u_H\|_{1/2}\|e_h\|_1+\|\dot u_H-u_h\|_{-1}\nonumber\\
&&\quad+C\|u_H-u\|_1\|v_h-u\|_1+C\|u_H-u\|_0\|u\|_{3/2},
\end{eqnarray*}
from which we get (\ref{pre_todos}) applying (\ref{e_h1}), (\ref{temporal_menos1}) from Lemma~\ref{le:z_t}, (\ref{eq:err_vel(t)}) and (\ref{cota_vel_lin}).

To conclude we get the error bound for the velocity in the $L^2$ norm. We will argue as in the proof of
Lemma~\ref{le:comparo}, that is, recalling~(\ref{eq:dual}), for $\varphi\in L^2(\Omega)$ we consider the
solution~$(w,k)$ of~(\ref{eq:law}), so that~(\ref{eq:laref}) holds, and we are left to estimate~$|\nu(\nabla
e_h,\nabla w_h)+((u\cdot \nabla)e_h,w_h)|$. It is easy to see that
\begin{eqnarray}\label{ultima}
\nu(\nabla e_h,\nabla w_h)+((u\cdot \nabla)e_h,w_h)=(\dot u_H-u_t,w_h)+(((u_H-u)\cdot \nabla)\tilde u_h,w_h).
\end{eqnarray}
Let us now bound the two terms on the right hand side of (\ref{ultima}).
For the first one, using (\ref{regu_dual2}) and (\ref{cota_vel_lin_du}) we get
\begin{eqnarray*}
(\dot u_H-u_t,w_h)&=&(\dot u_H-u_t,w_h-w)+(\dot u_H-u_t,w)\nonumber\\
&\le& \|\dot u_H-u_t\|_{-1}\|w_h-w\|_1+\|A^{-1}\Pi(\dot u_H-u_t)\|_0\|Aw\|_0\nonumber\\
&\le&C \|\dot u_H-u_t\|_{-1} h\|\varphi\|_0+C\|A^{-1}\Pi(\dot u_H-u_t)\|_0\|\varphi\|_0.
\end{eqnarray*}
Applying now (\ref{temporal_menos1}) and (\ref{temporal_menos2}) we have that $(\dot u_H-u_t,w_h)$ is
$O(H^{r+1}|\log(H)|/(t^{(r-1)/2})$ for $r=3,4$. Finally, we will bound the second term on the right hand side
of (\ref{ultima}). To this end we decompose
\begin{eqnarray*}
(((u_H-u)\cdot \nabla)\tilde u_h,w_h)&=&(((u_H-u)\cdot \nabla) (\tilde u_h-u),w_h)+(((u_H-u)\cdot \nabla) u,w_h)\nonumber\\
&\le&  C\|u_H-u\|_{1/2}\|\tilde u_h-u\|_1\|w_h\|_1+(((u_H-u)\cdot \nabla) u,w_h)\nonumber\\
&\le& C H\|\tilde u_h-u\|_1\|\varphi\|_0+(((u_H-u)\cdot \nabla) u,w_h),
\end{eqnarray*}
where in the last inequality we have applied (\ref{eq:err_vel(t)}) and we have bounded $\|w_h\|_1\le C\|w\|_1\le C\|\varphi\|_0$. Then, to conclude,
it only remains to bound $(((u_H-u)\cdot \nabla) u,w_h)$. Adding and subtracting $w$ we get
\begin{eqnarray*}
(((u_H-u)\cdot \nabla)u,w_h)&=&(((u_H-u)\cdot \nabla) u,w_h-w)+(((u_H-u)\cdot \nabla) u,w)\nonumber\\
&\le&C\|u_H-u\|_0\|u\|_{3/2}\|w_h-w\|_1+C\|u_H-u\|_{-1}\|\nabla  u \cdot w\|_1\nonumber\\
&\le& C\|u_H-u\|_0 h\|\varphi\|_0+C\|u_H-u\|_{-1}\|u\|_2\|w\|_2\nonumber\\
&\le& C\|u_H-u\|_0 h\|\varphi\|_0+C\|u_H-u\|_{-1}\|\varphi\|_0,
\end{eqnarray*}
where we have applied (\ref{regu_dual2}). To conclude we apply (\ref{eq:err_vel(t)}) and Lemma~\ref{u_Hmenos1}.
\end{proof}
\begin{remark}
We observe from Theorem~\ref{teo} that the postprocessed method increases the rate of convergence of the Galerkin
method in one unit in terms of $H$, the size of the coarse mesh. In the case of linear elements the improvement
is only achieved in the $H^1$ norm of the velocity but it is not obtained in the $L^2$ norm. Analogous results
had been obtained for the standard postprocessing in the linear case, see \cite{mini}, \cite{jbj_NS_apos}. Let us also observe
that a correct selection of the coarse and fine mesh diameters gives for the new postprocessed method the same rate of convergence than the Galerkin
method over the fine mesh, although, of course, with different constants in the error bounds. The advantage of the method we
propose is the saving in  computational effort.
For the method we propose the time integration is performed using the standard Galerkin method over the coarse mesh and
only at the final time we solve  one linearized Oseen-type problem over the fine mesh. Let us observe that, for example,
 the selection $H=h^{1/2}$ allows to get
for the new postprocessed method the rate of convergence of the fine mesh in the $H^1$ norm when using linear elements.
The selection $H=h^{2/3}$ allows to get the rate of convergence of the fine mesh in the $H^1$ norm when using quadratic elements,
the choice $H=h^{3/4}$ gives the rate of  convergence of the fine mesh in the $L^2$ norm also for quadratics and so on.

The reason why we have not carried out the error analysis for higher than cubic finite elements is that, as in the
papers \cite{heyran2} and \cite{jbj_regularity}, due to the loss of regularity at $t=0$ no better than $O(H^5|\log(H)|)$
error bounds can be proved.
\end{remark}
\section{Numerical experiments}
We consider the Navier-Stokes equations (\ref{onetwo}) in the domain
$\Omega=[0,1]\times[0,1]$ subject to homogeneous Dirichlet boundary
conditions. For the numerical experiments of this section we
approximate the equations using the mini-element
\cite{Brezzi-Fortin91} over a regular triangulation of $\Omega$
induced by the set of nodes $(i/N,j/N)$, $0\le i,j\le N$, where
$N=1/H$ is an integer. We study the spatially semi-discrete case.
Hence, in the time integration (with the trapezoidal rule)
sufficiently small time steps were taken so as to ensure that errors
arising from the spatial discretization were dominant. In the first
experiment we take the forcing term $f(t,x)$ such that the solution
of (\ref{onetwo})-(\ref{ic}) with $\nu=0.05$ is
\begin{eqnarray*}
u^1(x,y,t)&=&\pi t  \sin^2(\pi x)\sin(2\pi y),\nonumber\\
u^2(x,y,t)&=&-\pi t\sin^2(\pi y)\sin(2\pi x),\\
p(x,y,t)&=&20 t x^2 y\nonumber.
\end{eqnarray*}
When using the mini-element it has been observed and reported in the
literature (see for instance \cite{Verfurth1}, \cite{VerfurthSIAM},
\cite{Bank-Welfert2} \cite{Kim}, \cite{Pier1} and \cite{Pier2}) that
the linear part of the approximation to the velocity, $u_h^l$, is a
better approximation to the solution $u$ than $u_h$ itself. The
bubble part of the approximation is only introduced for stability
reasons and does not improve the approximation to the velocity and
pressure terms. For this reason in the numerical experiments of this
section we only consider the errors in the linear approximation to
the velocity. Also, following \cite{mini}, we postprocess only the
linear approximation to the velocity, i.e., we solve  problem
(\ref{pos11})-(\ref{pos12}) substituting $u_H$ and $\dot u_H$  by
$u_H^l$ and $\dot u_H^l$ respectively.  The finite element space at
the postprocessed step is the same mini-element defined over a
refined mesh of size $h$ small enough to capture the asymptotic rate
of convergence in the fine grid. The coarse and fine mesh sizes in
the experiments are $H=1/6$, $H=1/8$, $H=1/10$ and $H=1/20$ and
$h=1/20$, $h=1/26$, $h=1/32$ and $h=1/36$ respectively.  For the
postprocessed approximation we also keep only the linear part. We
apply the postprocessing step only once at time $t=0.5$.
\begin{figure}[h]
\hspace{-0.5cm}
\includegraphics[width=6cm]{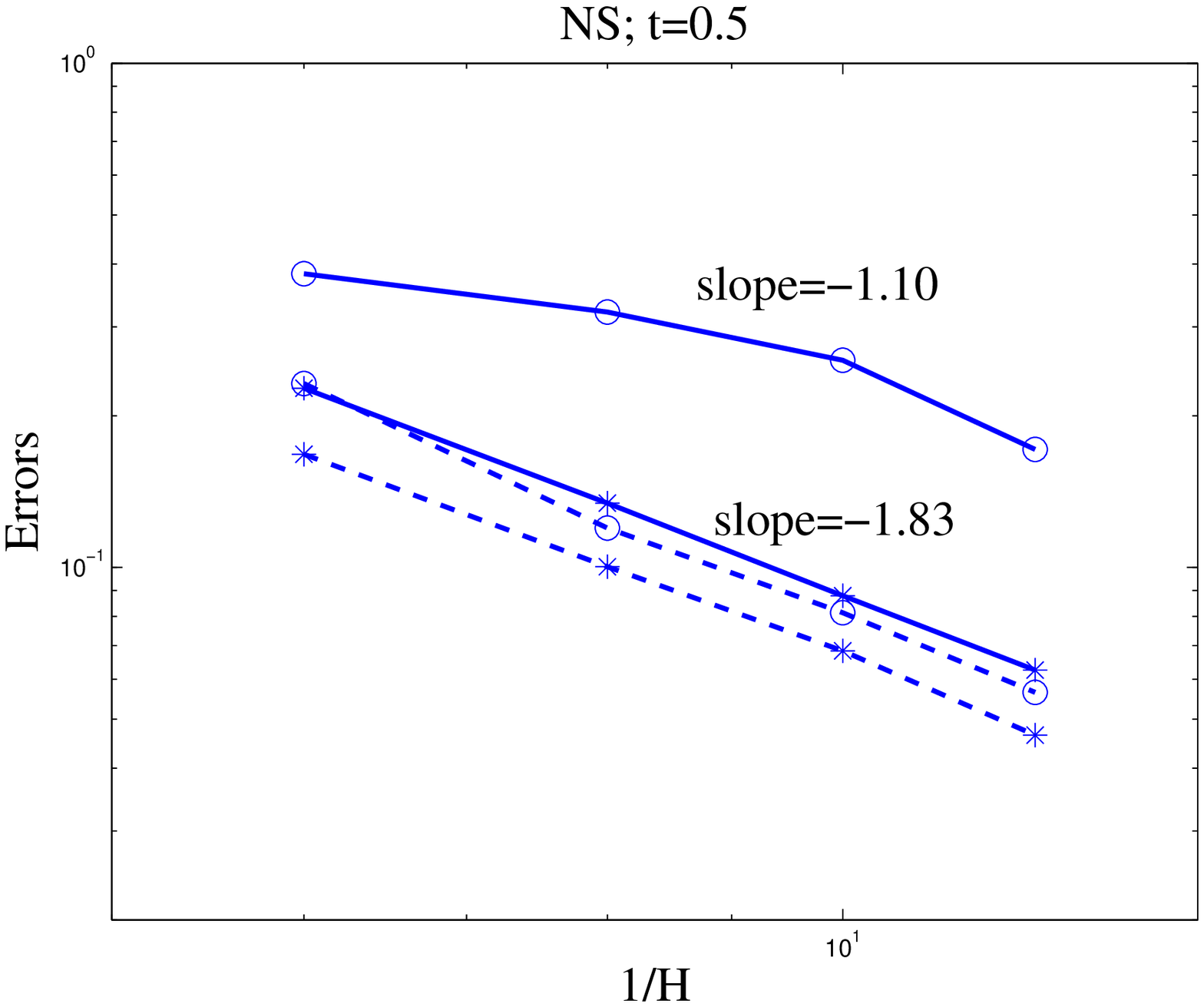}
\mbox{} \hfill
\includegraphics[width=6cm]{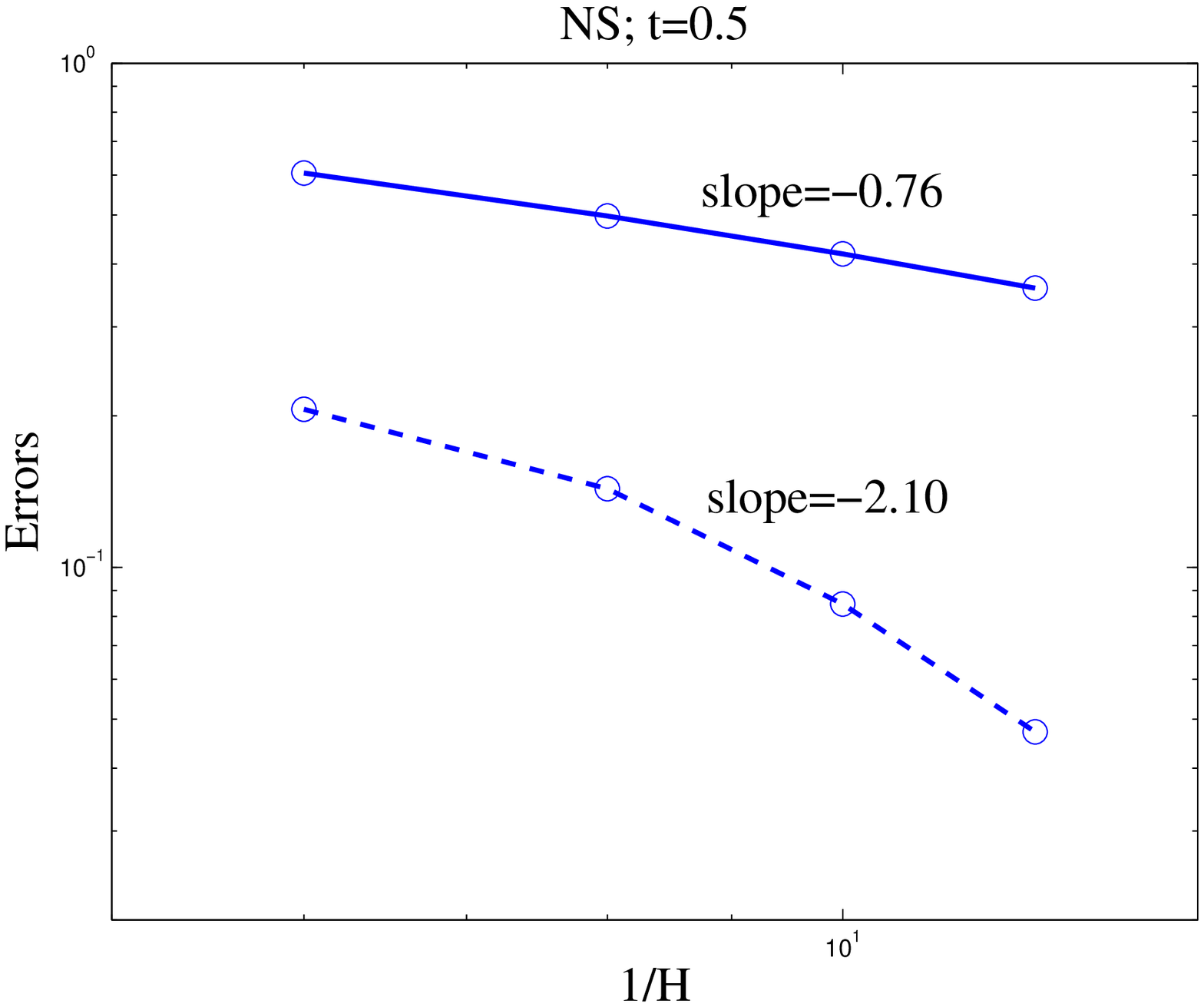}
\caption{Galerkin (solid lines) and postprocessed (dashed lines)
errors in $L^2$ (asterisks) and $H^1$ (circles) for $H=1/6$, $1/8$,
$1/10$ and $1/12$ and $h=1/20$, $1/26$, $1/32$ and $1/36$
respectively. On the left, errors for the first component of the
velocity. On the right, $L^2$ errors for the pressure.}\label{semi}
\end{figure}
In Figure~\ref{semi} we have represented the size of the Galerkin
and postprocessed errors with respect to the inverse of the coarse
mesh size $H$. On the left part of the picture we present the
results corresponding to the first component of the velocity. The
results obtained for the second component of the velocity are
analogous. On the right part of the picture we present the errors in
the pressure.
 In both pictures, we have used
solid line for the Galerkin method and dashed line for the
postprocessed method. The errors are measured in both the $H^1$ norm
and the $L^2$ norm. In the picture, they are represented by circles
($H^1$ norm errors) and asterisks ($L^2$ norm errors).
 We can observe on the left of
Figure~\ref{semi} that, in agreement with the theory, the
postprocessed method using the mini-element does not increase the
rate of convergence in the $L^2$ norm of the velocity although the
size of the errors are reduced. In the $H^1$ norm, however, also as
predicted by the theory, the postprocessed method does increase the
order of convergence by one unit (indeed, the errors of the
postprocessed method in the $H^1$ norm are slightly smaller than
those of the Galerkin method in the $L^2$ norm). The same
improvement is observed for the $L^2$ errors of the pressure on the
right of Figure~\ref{semi}. This means that we can obtain the level of
error corresponding to the fine mesh at essentially the cost of the
computation in the coarse mesh because the computation on the fine
mesh is performed only once at time $t=0.5$. Then, the dominant
computational cost is caused by the time evolution in the coarse
mesh saving time when compared with the time evolution in the fine
mesh that is needed in a standard approach.

\begin{figure}[h]
\hspace{-0.5cm}
\includegraphics[width=6cm]{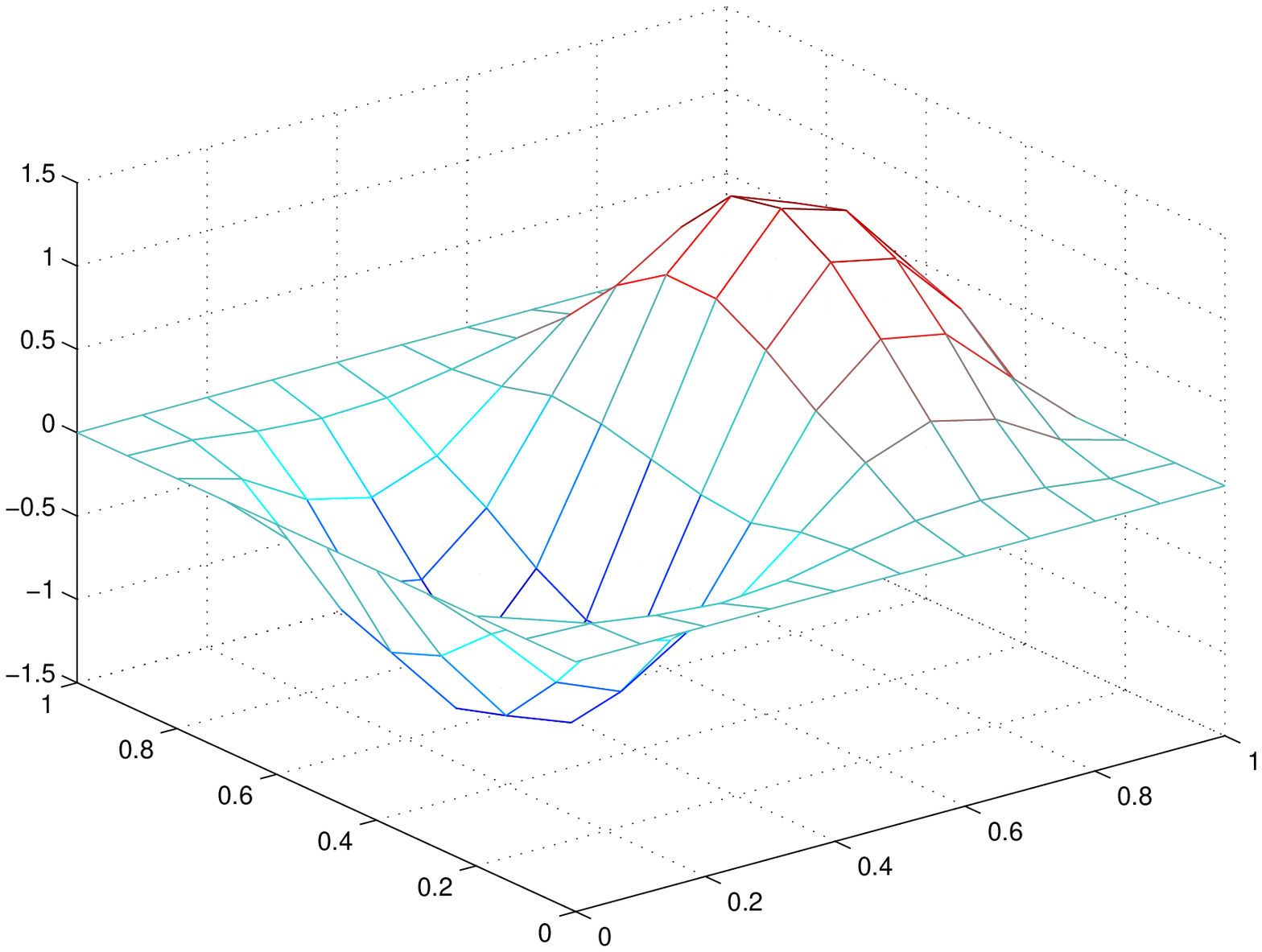}
\mbox{} \hfill
\includegraphics[width=6cm]{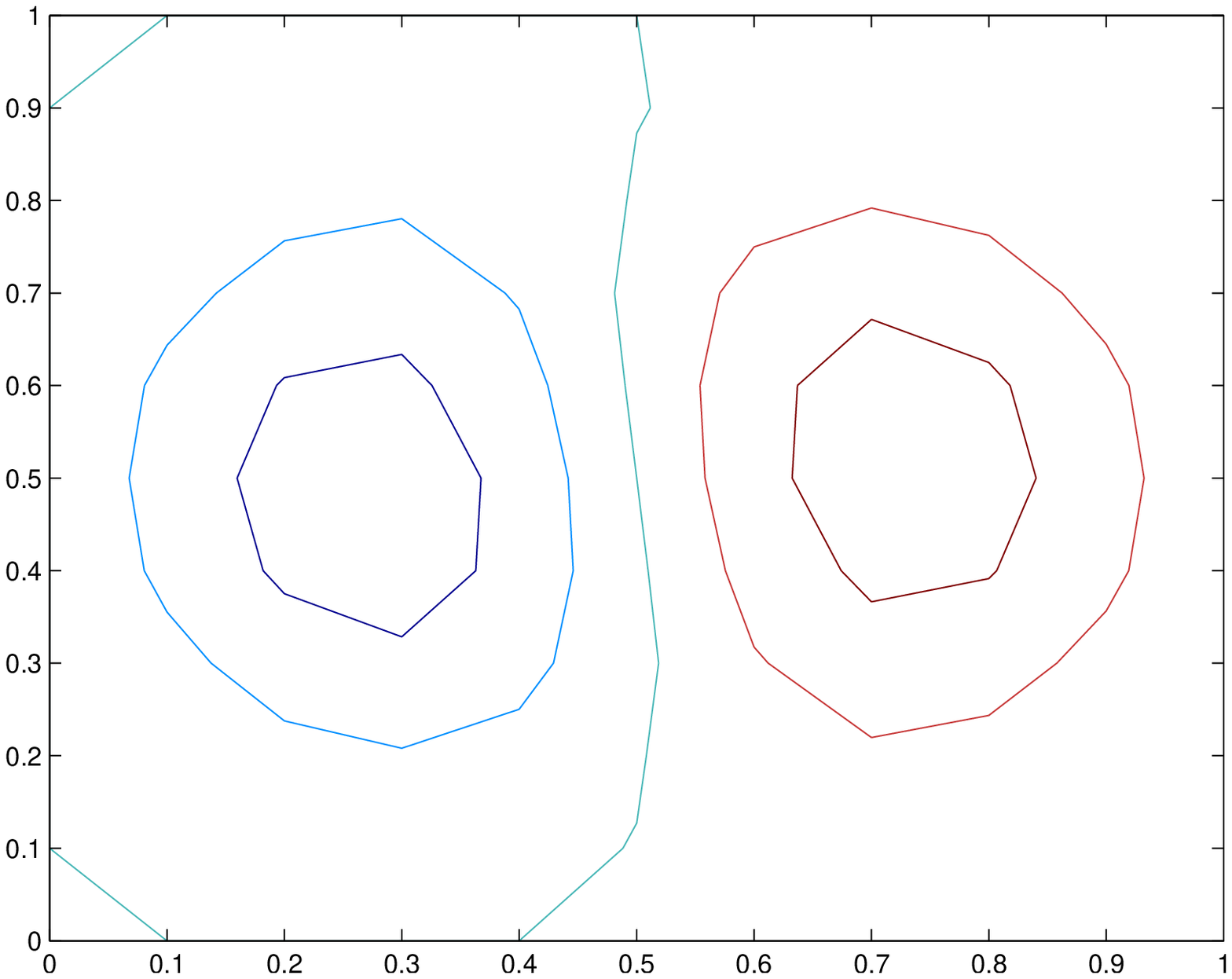}
\caption{First component of the velocity for the Galerkin method
with $\nu=0.01$ and $H=1/10$.}\label{gal1}
\end{figure}
\begin{figure}[h]
\hspace{-0.5cm}
\includegraphics[width=6cm]{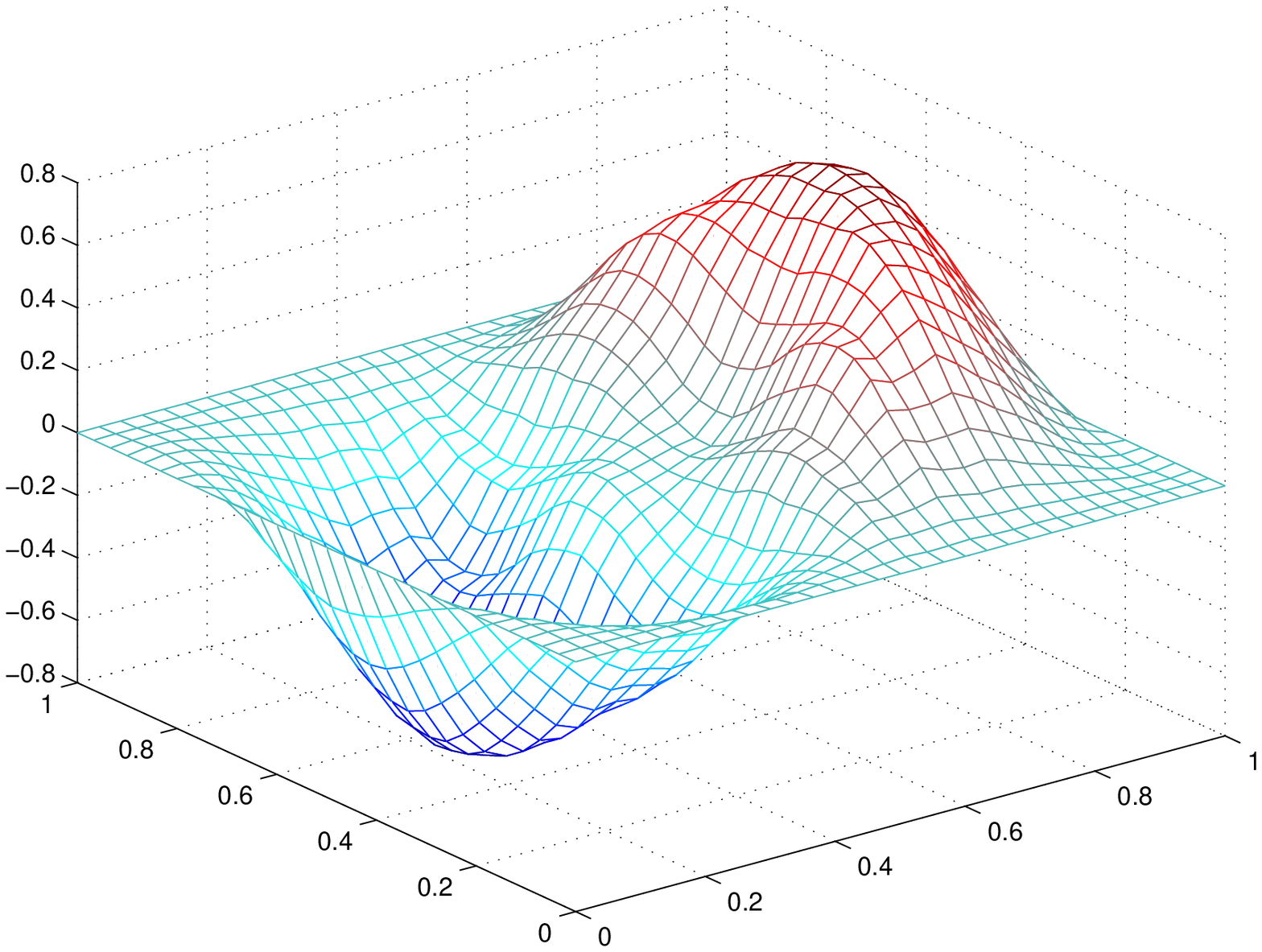}
\mbox{} \hfill
\includegraphics[width=6cm]{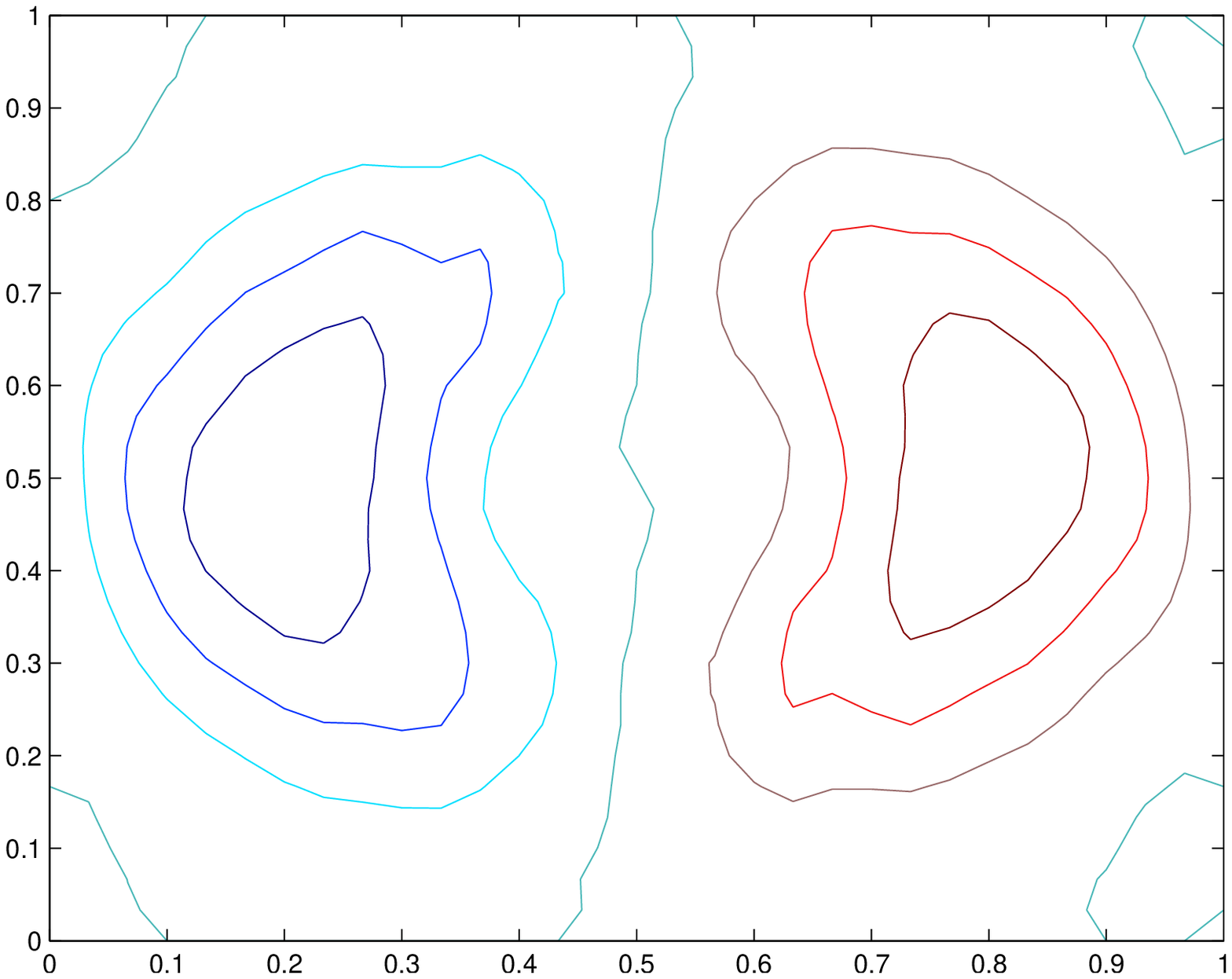}
\caption{First component of the velocity for the postprocessed
method with $\nu=0.01$, $H=1/10$ and $h=1/30$.}\label{pos1}
\end{figure}
\begin{figure}[h]
\hspace{-0.5cm}
\includegraphics[width=6cm]{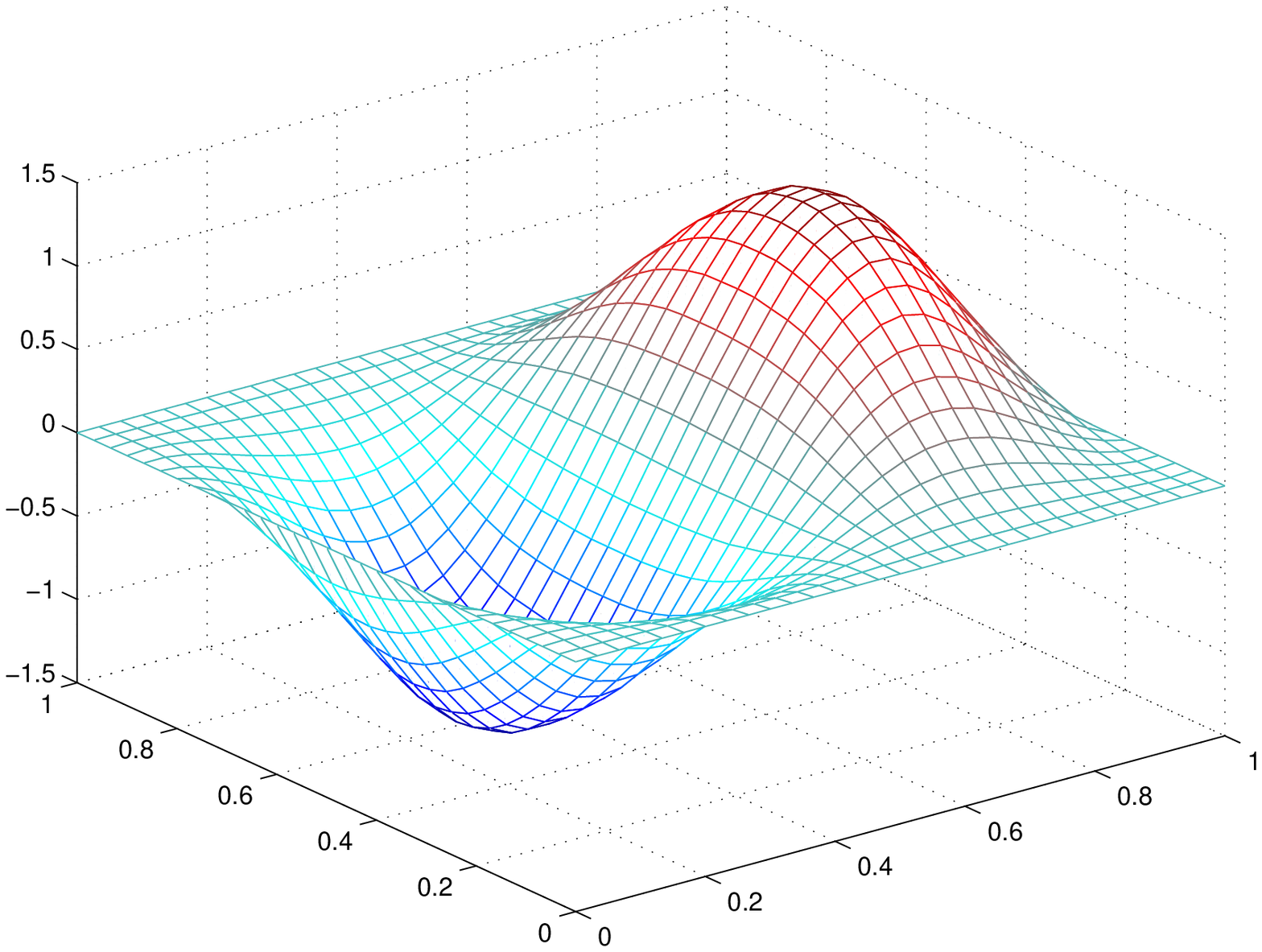}
\mbox{} \hfill
\includegraphics[width=6cm]{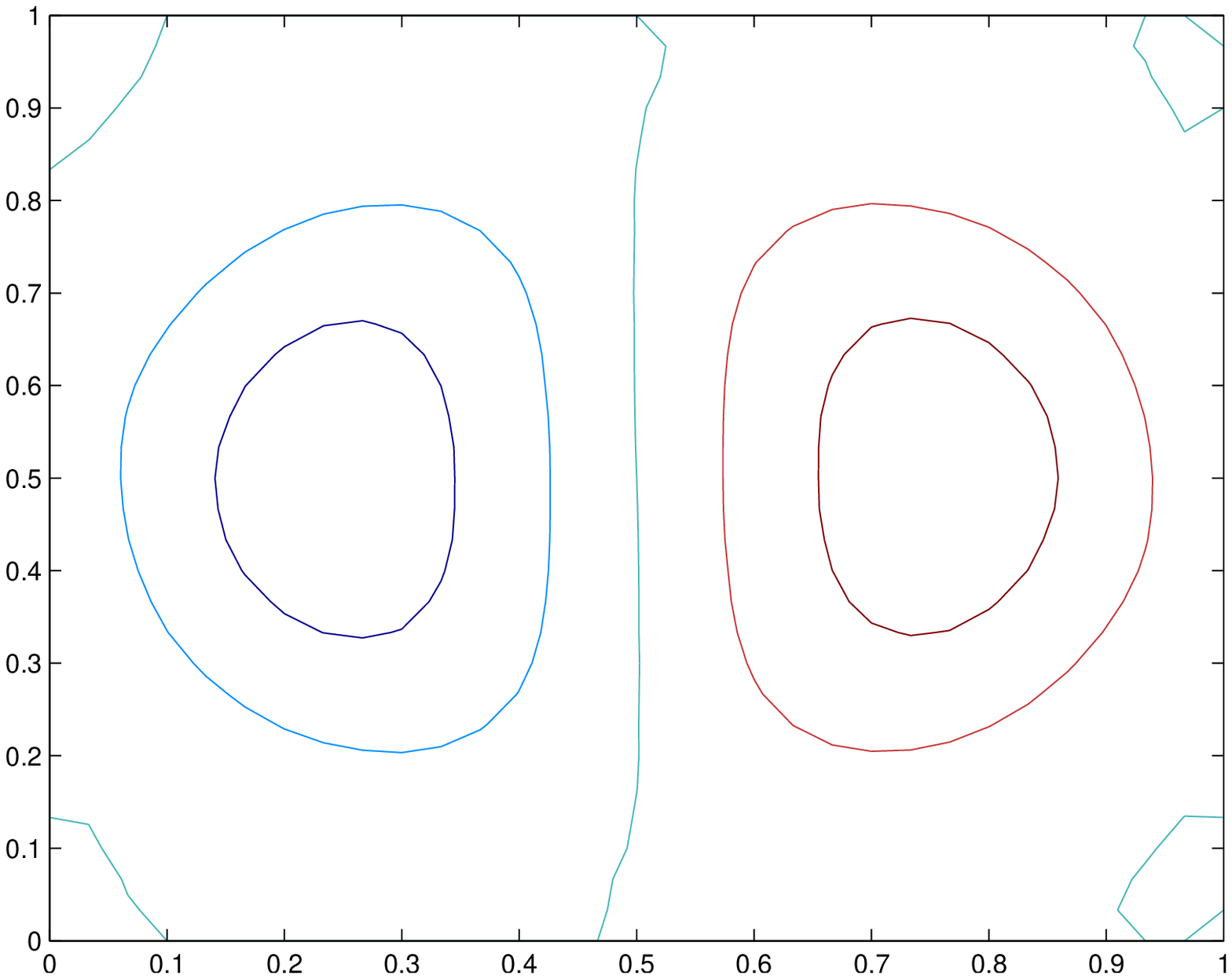}
\caption{First component of the velocity for the new postprocessed
method with $\nu=0.01$, $H=1/10$ and $h=1/30$.}\label{posnew1}
\end{figure}

\begin{figure}[h]
\hspace{-0.5cm}
\includegraphics[width=6cm]{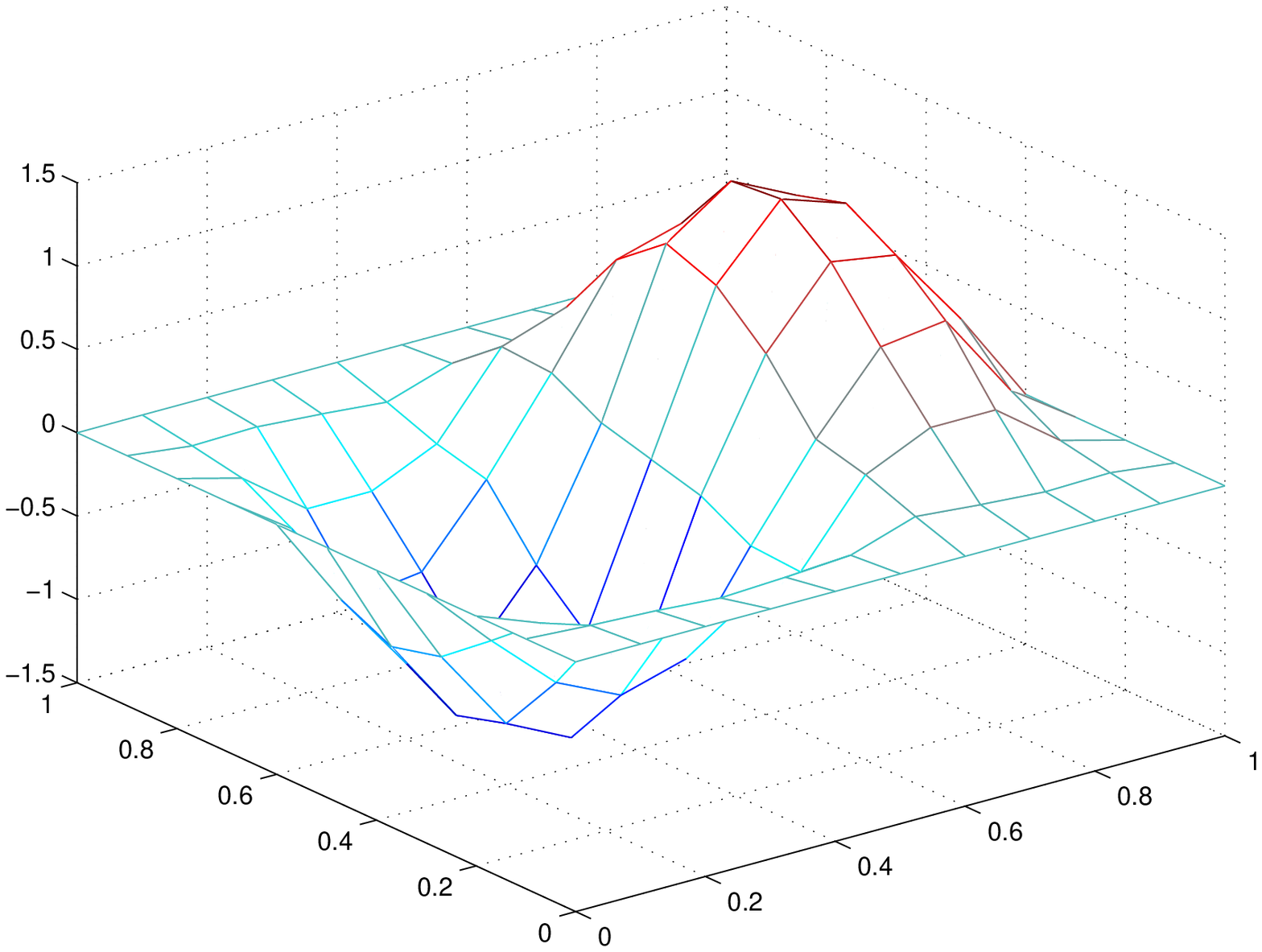}
\mbox{} \hfill
\includegraphics[width=6cm]{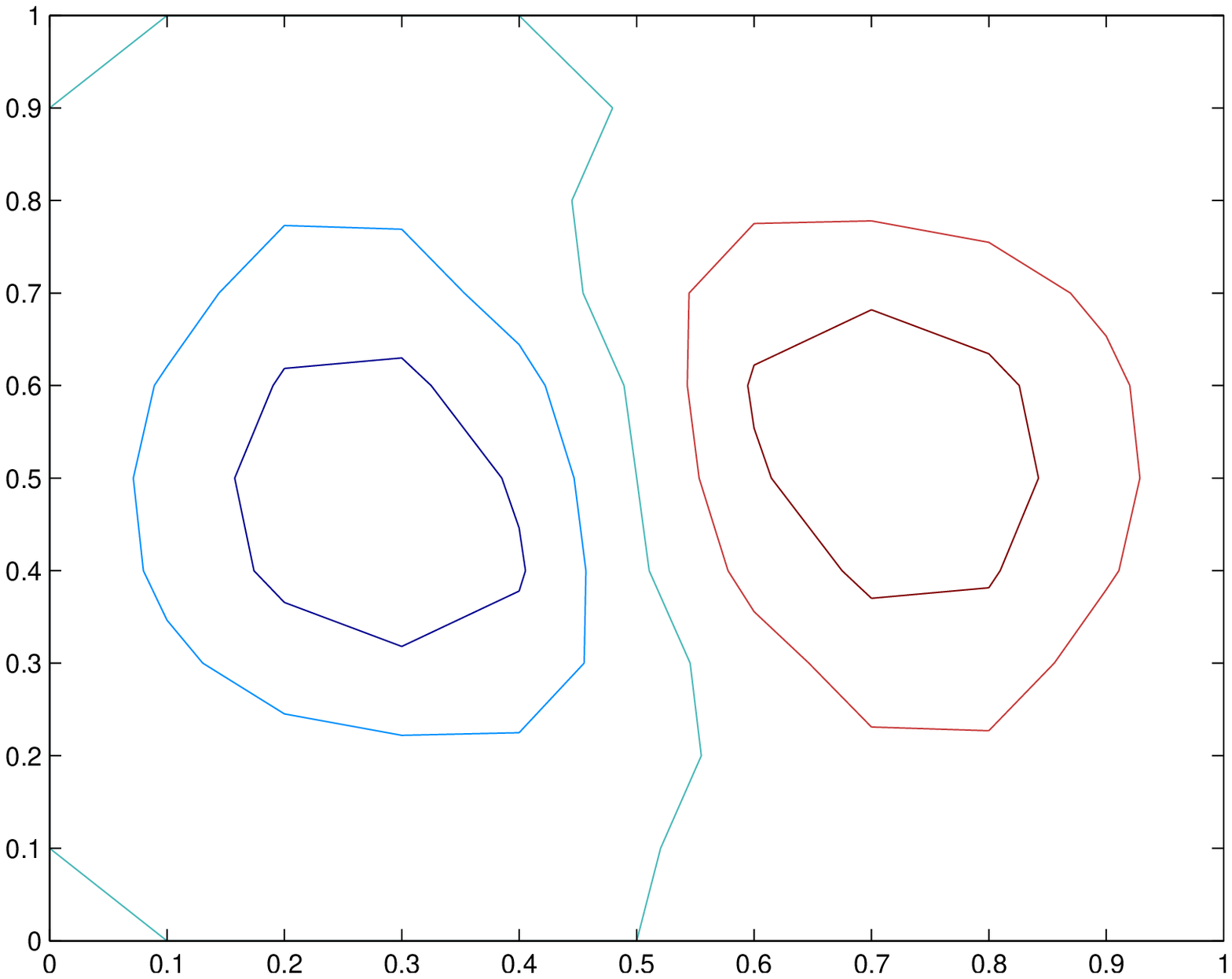}
\caption{First component of the velocity for the Galerkin method
with $\nu=0.005$ and $H=1/10$.}\label{gal2}
\end{figure}
\begin{figure}[h]
\hspace{-0.5cm}
\includegraphics[width=6cm]{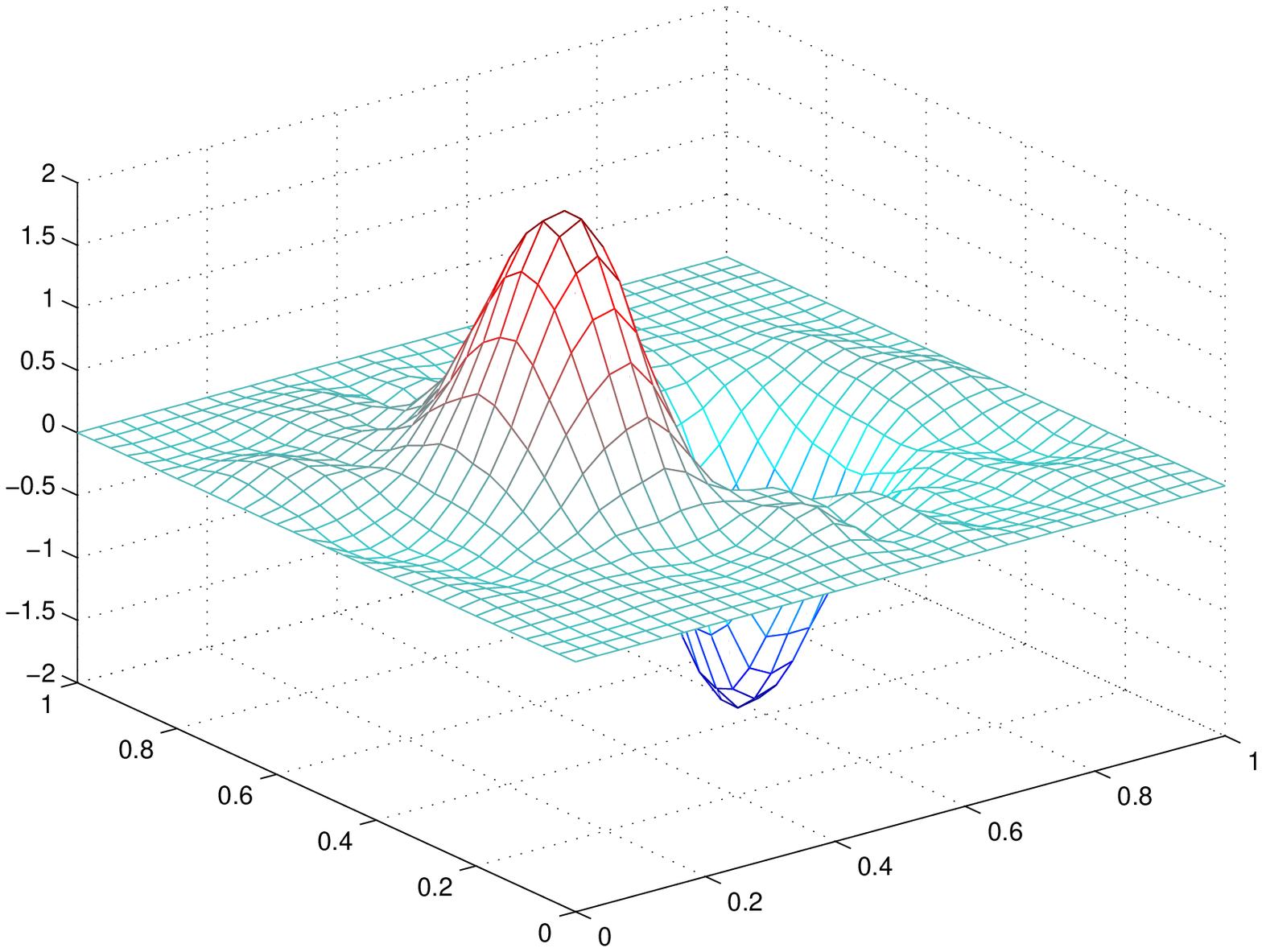}
\mbox{} \hfill
\includegraphics[width=6cm]{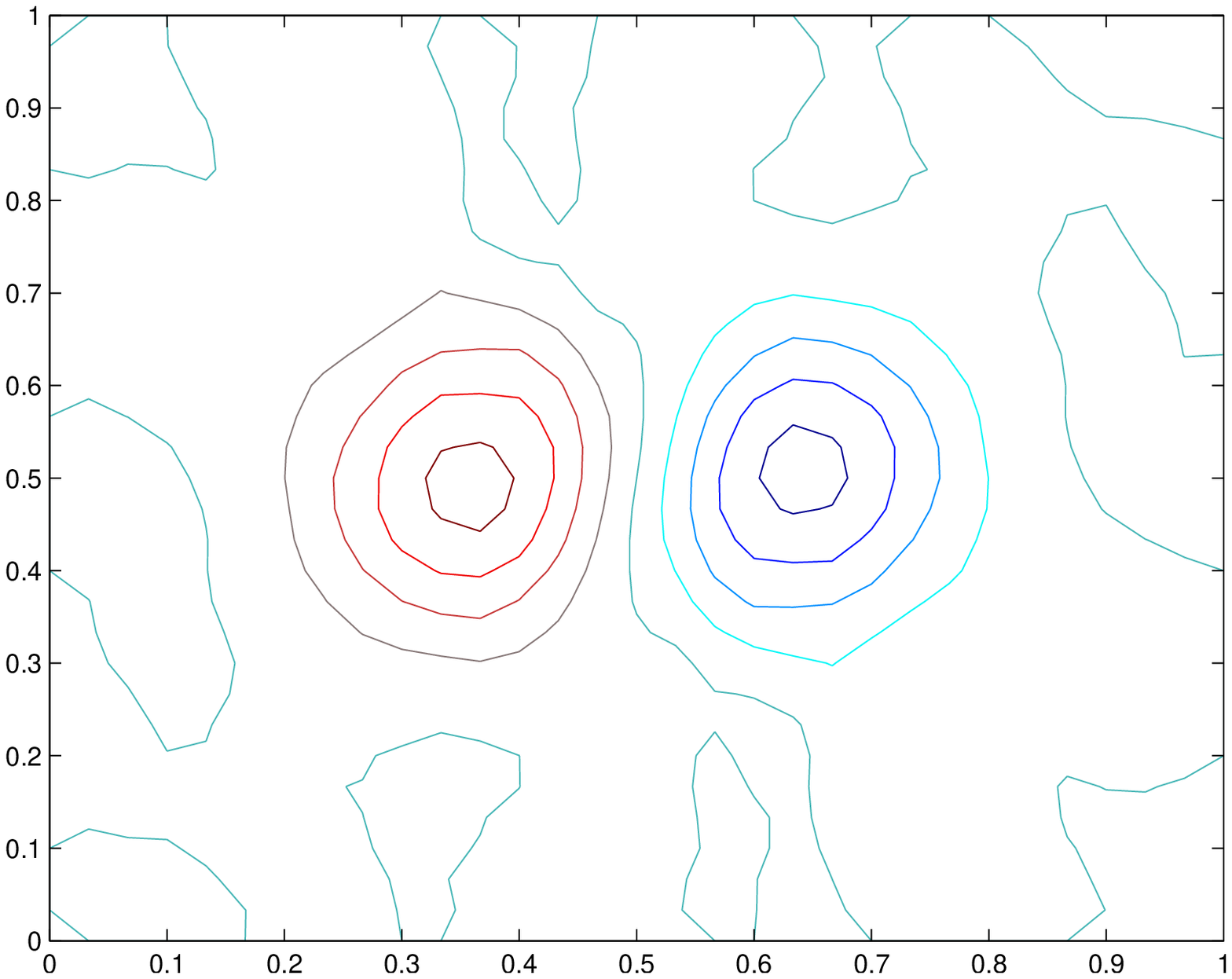}
\caption{First component of the velocity for the postprocessed
method with $\nu=0.005$, $H=1/10$ and $h=1/30$.}\label{pos2}
\end{figure}
\begin{figure}[h]
\hspace{-0.5cm}
\includegraphics[width=6cm]{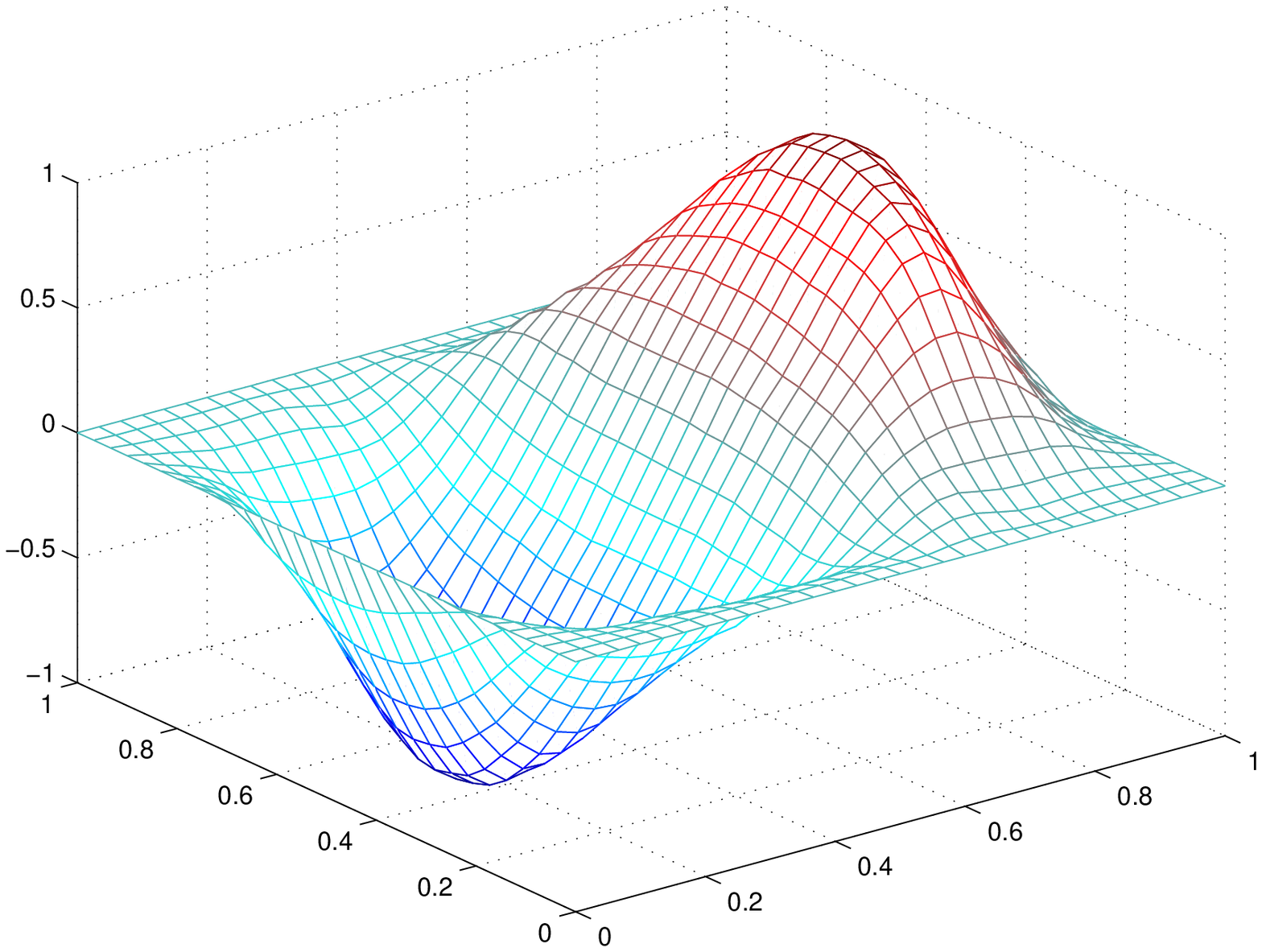}
\mbox{} \hfill
\includegraphics[width=6cm]{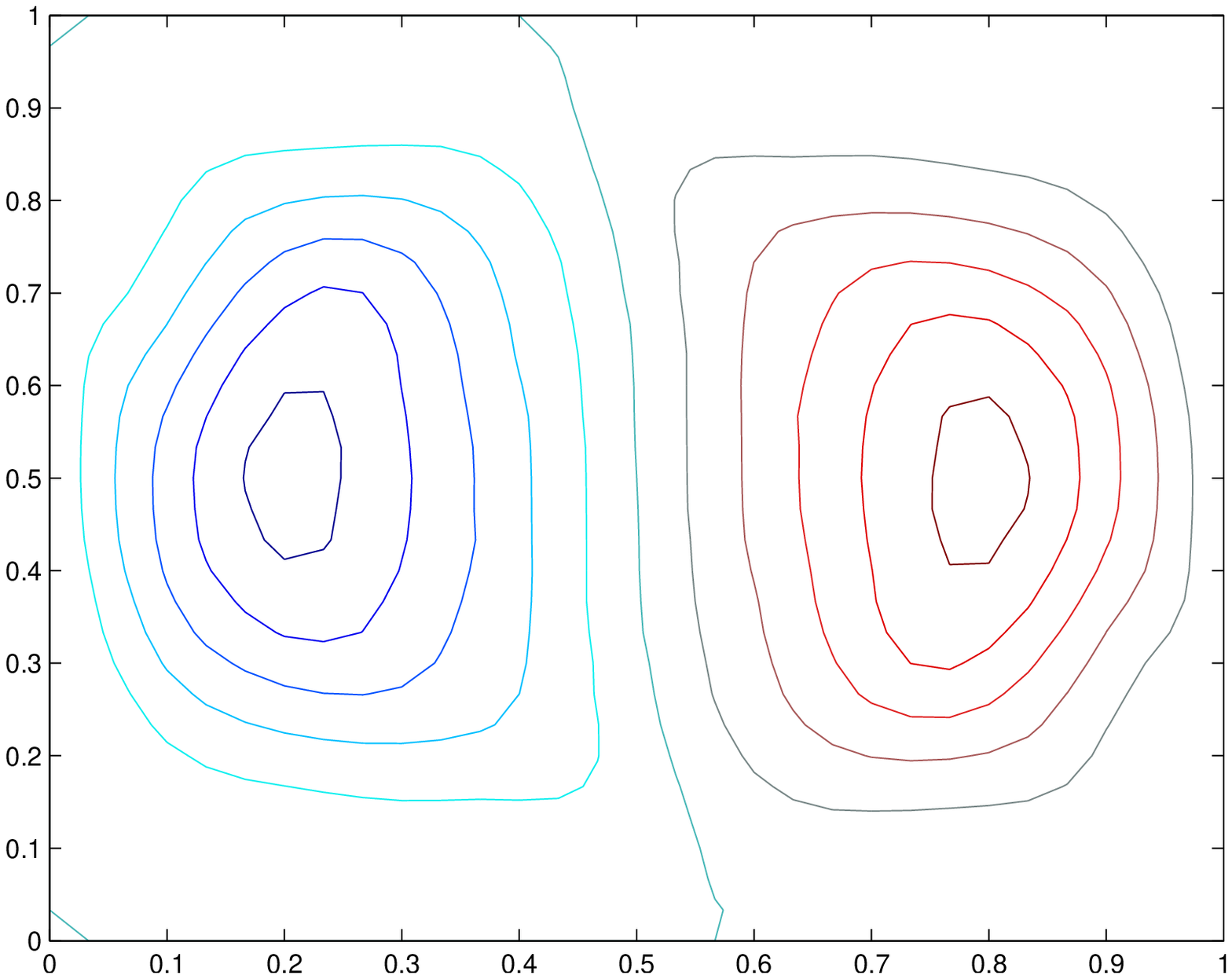}
\caption{First component of the velocity for the new postprocessed
method with $\nu=0.005$, $H=1/10$ and $h=1/30$.}\label{posnew2}
\end{figure}
In the next experiment we will show that the new postprocessed
method produces better results than both the Galerkin method and the
standard postprocessed method~(\ref{eq:stokes}). We consider now
equations (\ref{onetwo}) with initial condition
\begin{eqnarray*}
u^1(x,y,t)&=&-6\sin(\pi x)^3\sin(\pi y)^2\cos(\pi y),\nonumber\\
u^2(x,y,t)&=&6\sin(\pi x)^2\sin(\pi y)^3\cos(\pi x),
\end{eqnarray*}
and forcing term $f=0$.  We take first $\nu=0.01$. In
Figure~\ref{gal1} we have represented the linear part of the first
component of the velocity for the Galerkin method with $H=1/10$ at
time $T=0.5$.
In Figure~\ref{pos1} we show the standard postprocessed
approximation with $H=1/10$ and $h=1/30$. We can observe that the
standard postprocessing introduces some oscillations that were not
present in the Galerkin approximation. These oscillations are not
reduced with a smaller value of $h$. Finally, in
Figure~\ref{posnew1} we have represented the linear part of the
first component of the velocity for the new postprocessed
approximation and the same values of coarse and fine mesh sizes,
$H=1/10$ and $h=1/30$. We observe that this approximation does not
oscillate at all and it improves the accuracy of both Galerkin and
standard postprocessed approximations.

In the last experiment we repeat the experiment with a
smaller value of the diffusion parameter, $\nu=0.005$, and the same
values of $H$ and $h$ as that of the previous experiment.
 As it was already observed in
the case of convection-diffusion equations~\cite{jbj_ima1} the
behavior of the standard postprocessed method deteriorates as the
diffusion parameter decreases.

We can observe that both the Galerkin and the new postprocessed
approximations, see Figures~\ref{gal2} and \ref{posnew2} respectively,
do not present oscillations. As before, we can also observe the
smoothing effect achieved by postprocessing with the new method
proposed in this paper. On the other hand, the standard
postprocessed method produces a completely wrong approximation, see
Figure~\ref{pos2}. Let us remark that, as it has been noted before
in the literature, see \cite{Bretal}, \cite{simo}, the bubble
functions used in the mini-element to generate a stable mixed
finite-element satisfying the inf-sup condition (\ref{lbbh}) have
also a slightly stabilizing (over-diffusive) effect for moderate
values of the Reynolds number. This fact explains the
non-oscillating behavior of the linear part of the approximation to
the velocity in the Galerkin method of Figure~\ref{gal2}
We can also observe, see Figure~\ref{posnew2}, that
the over-diffusive effect appearing in the Galerkin approximation of
Figure~\ref{gal2} is attenuated by postprocessing with the new
method.
\newpage


\begin{thebibliography}{99}
\bibitem{Abboud_etal} {\sc H. Abboud, V. Girault and T. Sayah}, \textit{A second
order accuracy for a full discretized time-dependet Navier-Stokes equations by a two-grid scheme}, Numer. Math., 114 (2009), pp.~189-231.
\bibitem{Adams}
{\sc R. Adams},
\textit{Sobolev Spaces},
Academic Press, New York, 1975.
\bibitem{mini}
{\sc B. Ayuso, J. de Frutos, and J. Novo},
\textit{Improving the accuracy of the mini-element approximation to Navier--Stokes equations},
IMA J. Numer. Anal., 27 (2007), pp.~198--218.
\bibitem{bjj}
{\sc B. Ayuso, J. de Frutos, and J. Novo},
\textit{Improving the accuracy of the mini-element approximation to Navier--Stokes equations},
IMA J. Numer. Anal., 27 (2007), pp.~198--218.
\bibitem{Bank-Welfert2} {\sc E. Bank \& B. D. Welfert}, \textit{A posteriori error estimates for the Stokes problem},
SIAM J. Numer. Anal. 28 (1991), pp~591--623.
\bibitem{Bretal}
F. Brezzi, M. O. Bristeau, L. P. Franca \& M. M. Gilbert Rog\'e,
{\em A relationship between stabilized finite element methods and
the Galerkin method with bubble functions}, Comput. Meth. Appl.
Mech. Engrg. 96, 1992, 117--129.
\bibitem{BF}
{\sc F. Brezzi and R. S. Falk},
\textit{Stability of higher-order Hood--Taylor methods},
SIAM J. Numer. Anal., 28 (1991), pp.~581--590.
\bibitem{Brezzi-Fortin91}
{\sc F.~Brezzi and M.~Fortin},
\textit{Mixed and Hybrid Finite Element Methods}, Springer, New York, 1991.

\bibitem{Ciarlet} {\sc P. G. Ciarlet}, \textit{The Finite Element Method for Elliptic
Problems\/}, North-Holland, Amsterdam (1978).

\bibitem{jbj_regularity} {\sc J. de Frutos, B. Garc\'{\i}a-Archilla and J. Novo},
\textit{The postprocessed mixed finite-element method for the Navier-Stokes equations: refined error bounds}, SIAM
J. Numer. Anal., 46 (2007), pp.~201--230.

\bibitem{jbj_fully_discrete} {\sc J. de Frutos, B. Garc\'{\i}a-Archilla and J. Novo},
\textit{Postprocessing Finite-Element Methods for the Navier-Stokes Equations: The Fully Discrete Case}, SIAM J. Numer. Anal., 47 (2008), pp. 596-621.

\bibitem{jbj_ima1} {\sc J. de Frutos, B. Garc\'{\i}a-Archilla and J. Novo},
\textit{Accurate appoximations to time-dependent nonlinear convection-diffusion problems}, IMA
J. Numer. Anal., 30, (2010), 1137-1158.

\bibitem{jbj_ima2}{\sc J. de Frutos, B. Garc\'{\i}a-Archilla and J. Novo},
\textit{Nonlinear convection-diffusion problems: fully discrete approximations and a posteriori error estimates},
IMA J. Numer. Anal., to appear.

\bibitem{two-grid-nosotros} {\sc J. de Frutos, B. Garc\'{\i}a-Archilla and J. Novo},
\textit{Optimal error bounds for two-grid schemes applied to the Navier-Stokes equations}, preprint.

\bibitem{jbj_NS_apos} {\sc J. de Frutos, B. Garc\'{\i}a-Archilla and J. Novo},
\textit{A posteriori error estimations for mixed finite-element approximations to the Navier-Stokes equations}, preprint.

\bibitem{Bosco-Julia-Titi}
{\sc B. Garc\'{\i}a-Archilla, J. Novo, and E.~S. Titi},
\textit{Postprocessing the Galerkin method: A novel approach to approximate inertial manifolds},
SIAM J. Numer. Anal., 35 (1998), pp.~941--972.

\bibitem{Bosco-Julia-Titi-2}
{\sc B. Garc\'{\i}a-Archilla, J. Novo, and E.~S. Titi},
\textit{An approximate inertial manifold approach to postprocessing Galerkin methods for the Navier--Stokes equations},
Math. Comp., 68 (1999), pp.~893--911.

\bibitem{girrav}
{\sc V.~Girault and P.~A.~Raviart},
    \textit{Finite Element Methods for Navier--Stokes Equations},
    Springer-Verlag, Berlin, 1986.
    \bibitem{heyran0}
\textsc{J.~G. Heywood and R. Rannacher},
\textit{Finite element approximation of the nonstationary Navier--Stokes problem}. I. {\it Regularity of solutions and second-order error estimates for spatial discretization},
SIAM J. Numer. Anal., 19 (1982), pp.~275--311.
\bibitem{heyran2}
{\sc J.~G. Heywood and R. Rannacher},
\textit{Finite element approximation of the non\-sta\-tiona\-ry Navier--Stokes problem.} III: {\it Smoothing property and higher order error estimates for spatial discretization},
SIAM J. Numer. Anal., 25 (1988), pp.~489--512.
\bibitem{hood0}
{\sc P.~Hood and C.~Taylor},
\textit{A numerical solution of the Navier--Stokes equations using the
finite element technique}, Comput. Fluids, 1 (1973), pp.~73--100.

\bibitem{Hou_etal} {\sc Y. Hou and K. Li}, \textit{Postprocessing Fourier Galerkim method for the Navier-Stokes
equations}, SIAM. J. Numer. Anal., 47 (2009), pp.~1909-1922.
\bibitem{Kim} \textsc{ Y. Kim and S.Lee}, \textit{Modified Mini finite element for the
Stokes problem in ${\mathbb R}^2$ or ${\mathbb R}^3$}, Advances in Computational Mathematics, 12 (2000),
pp. 261-272.

\bibitem{Liu_Hou} {\sc Q. Liu and Y. Hou}, {\it A two-level finite element method for the Navier-Stokes equations based
on a new projection}, Applied Mathematical Modelling, 34 (2010) 383-399.

\bibitem{Lay-Tob} {\sc W. Layton and L. Tobiska}, \textit{A Two-Level method with backtracking for the Navier-Stokes equations}, SIAM
J. Numer. Anal., 35 (1998), pp.~2035-2054.
\bibitem{Lay-Len} {\sc W. Layton and W. Lenferink}, \textit{Two-Level Picard and Modified Picard Methods for the Navier-Stokes Equations}, Appl. Math.
Comput., 80 (1995), pp.~1-12.
\bibitem{Margolin_titi_wynne} {\sc L. G. Margolin, E. S. Titi and S. Wynne}, \textit{The postprocessing Galerkin and
nonlinear Galerkin methods-A truncation analysis point of view}, SIAM J. Numer. Anal., 41 (2003), pp.~695-714.

\bibitem{Pier1}
\textsc{R. Pierre}, \textit{Simple $C^0$ Approximations for the Computation of Incompressible
Flows}, Comput. Methods. Appl. Mech. Engrg., 68 (1988), pp. 205-227.

\bibitem{Pier2}
\textsc{R. Pierre}, \textit{Regularization Procedures of Mixed Finite Element Approximations
of the Stokes Problem}, Numer. Methods Partial Differential Equations, 5 (1989), pp. 241-258.
\bibitem{simo} \textsc{J. C. Simo, F. Armero \& C. A. Taylor}, {\em Stable
and time-dissipative finite element methods for the incompressible
Navier-Stokes equations in advection dominated flows}, Int. J.
Numer. Methods Engrg. 38, 1995, 1475-1506.

\bibitem{Temam}{\sc R. Temam},
\textit{Infinite-dimensional dynamical systems in mechanics and physics\/}, Springer-Verlag, New York, 1988.

\bibitem{Verfurth1} {\sc  R. Verfurth}, \textit{A posteriori error estimators for the Stokes equations}, Numer. Math. 55 (1989),
pp. 309-325.



\bibitem{VerfurthSIAM} \textsc{R. Verfurth}, \textit{Multilevel algorithms for Mixed Problems II. Treatment
of the Mini-Elment}, SIAM J. Numer. Anal., 25 (1998), pp. 285-293.
\bibitem{Xu} {\sc J. Xu}, \textit{A novel two-grid method for semilinear techniques for linear and nonlinear PDE.}, SIAM J. Numer.
Anal., 33 (1996), pp.~1759-1777.

\end{thebibliography}
\end{document}